\theoremstyle{plain}
\newtheorem{theorem}{Theorem}[section]
\newtheorem{lemma}[theorem]{Lemma}
\newtheorem{proposition}[theorem]{Proposition}
\newtheorem{corollary}[theorem]{Corollary}
\theoremstyle{definition}
\newtheorem{definition}[theorem]{Definition}
\newtheorem{remark}[theorem]{Remark}
\newtheorem{example}[theorem]{Example}
\newcommand*\NoIndentAfterEnv[1]{\AfterEndEnvironment{#1}{\par\@afterindentfalse\@afterheading}}
\newcommand{\ext}{\operatorname{ext}}
\newcommand{\field}[1]{\mathbb{#1}}
\newcommand{\N}{\field{N}}
\newcommand{\R}{\field{R}}
\newcommand{\C}{\field{C}}
\renewcommand{\H}{\field{H}}
\newcommand{\D}{\field{D}}
\renewcommand{\S}{\mathbb{S}}
\renewcommand{\Re}{\mathrm{Re}}
\newcommand{\lhol}{\mathcal{SH}_L}
\newcommand{\intrin}{\mathcal{N}}
\newcommand{\sderiv}[1][]{\partial_{S#1}}
\newcommand{\supp}{\operatorname{supp}}
\newcommand{\borel}{\mathsf{B}}
\newcommand{\hardy}{\mathcal{H}}
\newcommand{\BMOSH}{\mathrm{BMOSH}}
\newcommand{\BMOA}{\mathrm{BMOA}}
\newcommand{\VMOSH}{\mathrm{VMOSH}}
\newcommand{\VMOA}{\mathrm{VMOA}}
\newcommand{\bloch}{\mathcal{B}}
\newcommand{\dirichlet}{\mathcal{D}}
\date{}
\title{\bf BMO- and VMO-spaces of slice hyperholomorphic functions}
\author{
Jonathan Gantner \\
Politecnico di Milano\\
Dipartimento di Matematica\\
Via E. Bonardi, 9\\
20133 Milano, Italy\\
jonathan.gantner@polimi.it\\
\and
J. Oscar Gonz\'{a}lez-Cervantes\\
Departamento de Matem\'{a}ticas\\
E.S.F.M. del I.P.N.\\
 07338 Mexico, D.F., Mexico\\
 jogc200678@gmail.com\and
Tim Janssens\\
University of Antwerp\\
Department of Mathematics\\
Middelheimlaan, 1\\
2020 Antwerp, Belgium\\
tim.janssens4@uantwerpen.be
}
\begin{document}

\maketitle
\begin{abstract}
In this paper we continue the study of important Banach spaces of slice hyperholomorphic functions on the quaternionic unit ball by investigating the BMO- and VMO-spaces of slice hyperholomorphic functions. We discuss in particular conformal invariance and a refined characterization of these spaces in terms of Carleson measures. Finally we show the relations with the Bloch and Dirichlet space and the duality relation with the Hardy space $\hardy^{1}(\D)$.

The importance of these spaces in the classical theory is well known. It is therefore worthwhile to study their slice hyperholomorphic counterparts, in particular because slice hyperholomorphic functions were found to have several applications  in operator theory and Schur analysis.
\end{abstract}
\section{Introduction}
The notion of holomorphicity can be generalized in several ways to functions that are defined on the skew-field of quaternions. The most studied notion of generalized holomorphicity in this setting is Fueter regularity, which was introduced by Fueter in 1932 in \cite{Fueter1,Fueter2} and which lead to a rich and nowadays well developed theory. In recent times however, the notion of slice hyperholomorphicity introduced in \cite{GentiliStruppa} in 2006 has attracted the interest of mathematicians. This was in particular due to its applications in the theory of quaternionic linear operators: the slice hyperholomorphic Cauchy kernel lead to the discovery of the proper notion of spectrum for quaternionic linear operators \cite{SHBook}. As a consequence, it was possible to define a generalized Riesz-Dunford functional calculus for such operators, the so-called $S$-functional calculus in \cite{SHBook}, which is based on the theory of slice hyperholomorphic functions. Furthermore it was possible to develop a continuous functional calculus for bounded normal operators on quaternionic Hilbert spaces in \cite{GhilPer} and to prove the spectral theorems for unitary and for unbounded normal quaternionic linear operators \cite{STUnit, STNormal}. These methods are fundamental in quaternionic quantum mechanics \cite{Adler}. Moreover the higher dimensional version of slice hyperholomorphicity on Clifford algebras introduced in \cite{CliffHolo} allowed to develop the $S$-functional calculus also for $n$-tuples of non commuting operators \cite{CliffCalc}.

Besides operator theory, slice hyperholomorphic functions also appear in Schur analysis. (For an overview, we refer to \cite{AlpayBook}.) The theory is in particular fundamental for the realization of slice hyperholomorphic Schur functions \cite{SHSchur1, SHSchur2, SHSchur3}.

The theory of slice hyperholomorphic functions itself is nowadays developing rapidly in several directions. For an overview we refer to \cite{SHBook} and \cite{GentiliBook} as well as to \cite{ACS} and \cite{CSS} and the references therein.

Several Banach spaces of holomorphic functions play an important role in complex operator theory \cite{kehe}. Because of the fundamental role that slice hyperholomorphic functions play in quaternionic operator theory, the slice hyperholomorphic counterparts of these spaces are objects worth to be studied in detail. The study of these spaces has begun recently. The slice hyperholomorphic  Hardy spaces $\hardy^2(\D)$ on the unit ball and $\hardy^2(\H^+)$ on the quaternionic half space $\H^+$, which consists of those quaternions that have positive real part, were introduced in \cite{SHSchur1, SHSchur2} and Hardy spaces $\hardy^p(\D)$ with $0<p\leq+\infty$ on the unit ball were studied in detail in \cite{HardyRef}. Slice hyperholomorphic Bergman spaces were treated in \cite{Bergman1,Bergman2,Bergman3,Bergman4} and slice hyperholomorphic Fock spaces in \cite{Fock}. Furthermore \cite{CGS3} contains a first study on slice hyperholomorphic Bloch, Besov and weighted Bergman spaces and on the Dirichlet space in this setting.
 
 The aim of this paper is to continue this work by a detailed study of the spaces of slice hyperholomorphic functions of bounded and vanishing mean oscillation. First steps in this direction have been done in \cite{Arcozzi}, where the slice hyperholomorphic BMO-space appears in connection with quaternionic Hankel operators.
 
 The plan of the paper is as follows: in \Cref{PrelSec} we give a brief introduction to the theory of slice hyperholomorphic functions and recall the results that we need later on. In \Cref{CarlMeasSec} we define Carleson measures, vanishing Carleson measures, and their slice decompositions and give a detailed discussion of the relations between these two concepts. Carleson measures and their slice decompositions also appeared in the context of Hardy spaces in \cite{sabsar} and were in this context originally introduced in \cite{Arcozzi}.

 In \Cref{BMOSect} we introduce the slice hyperholomorphic BMO-space and show several of its properties: we discuss the relation between slicewise and global BMO-norms and show that the space is invariant under $i$-composition with slice hyperholomorphic M\"obius transformations. Moreover we show that it is possible to define a norm that is equivalent to the BMO-norm and invariant under $i$-composition with slice hyperholomorphic M\"obius transformations generated by a fixed complex plane. Finally, we give a refined characterization of slice hyperholomorphic BMO-functions in terms of Carleson measures. 

\Cref{VMOSec} studies the slice hyperholomorphic VMO-space and gives several characterizations of functions in this space. In particular we show that it is---as in the complex case---the BMO-closure of the set of slice hyperholomorphic polynomials, which coincides also with the closure of the set of those slice hyperholomorphic functions that can be extended continuously to the boundary of the unit ball. Then we also characterize the functions in this space in terms of Carleson measures and we discuss power series with Hadamard gaps in this setting.

Finally, we show in \Cref{RelSec} that several relations between the BMO-space resp. the VMO-space and other function spaces also hold true in the slice hyperholomorphic setting. Precisely, we discuss the Bloch and the Drichlet space and show that the duality relations with the Hardy space $\hardy^{1}(\D)$ hold true in our setting.

\section{Preliminaries}\label{PrelSec}
The skew-field of quaternions $\H$ consists of the real vector space of elements of the form
$x=\xi_0 + \sum_{\ell=1}^3\xi_\ell e_\ell$ with $\xi_\ell\in\R $, which is endowed with a product such that $1$ is the multiplicative unit and the imaginary units $e_1$, $e_2$ and $e_3$ satisfy
$e_{\ell}^2 = -1$ and $ e_{\ell}e_{\kappa} =  - e_{\kappa}e_{\ell}$ for $\kappa,\ell \in\{1,2,3\}$ with $\kappa\neq \ell$.
The real part of a quaternion $x = \xi_0 + \sum_{\ell=1}^3\xi_\ell e_\ell$ is defined as $\Re(x) := \xi_0$,
its imaginary part as
 $\underline{x} := \sum_{\ell=1}^3\xi_\ell e_\ell$ and its conjugate as $\overline{x} := \Re(x) - \underline{x}$.
Each element of the set
\[\S := \{ x\in\H: \Re(x) = 0, |x| = 1 \}\]
 is a square-root of $-1$ and is therefore called an imaginary unit. For any $i\in\S$, the subspace
 $\C_i := \{x_0 + i x_1: x_1,x_2\in\R\}$ is isomorphic to the field of complex numbers. For $i,j\in\S$ with $i\perp j$, set $k=ij = -ji$. Then $1$, $i$, $j$ and $k$ form an
 orthonormal basis of $\H$ as a real vector space and $1$ and $j$ form an orthonormal basis of $\H$ as a left or right vector space over the complex plane $\C_i$, that is
 \[ \H = \C_i + \C_i j\quad\text{and}\quad \H = \C_i + j\C_i.\]
  Any quaternion $x$ belongs to such a complex plane: if we set
 \[i_x := \begin{cases}\underline{x}/|\underline{x}|,& \text{if  }\underline{x} \neq 0 \\ \text{any }i\in\S, \quad&\text{if }\underline{x}  = 0,\end{cases}\]
 then $x = x_0 + i_x x_1$ with $x_0 =\Re(x)$ and $x_1 = |\underline{x}|$. The set
 \[
 [x] := \{x_0 + ix_1: i\in\S\},
 \]
is a 2-sphere, that reduces to a single point if $x$ is real.
\begin{definition}
A set $U\subset\H$ is called
\begin{enumerate}[(i)]
\item axially symmetric if $[x]\subset U$ for any $x\in U$ and
\item a slice domain if $U$ is open, $U\cap\R\neq 0$ and $U\cap\C_i$ is a domain for any $i\in\S$.
\end{enumerate}
\end{definition}

\begin{definition}\label{LHolDef}
Let $U\subset\H$ be an axially symmetric open set. A real differentiable function $f: U\to\H$ is called (left) slice hyperholomorphic if it has the form
\begin{equation}\label{LSHol}
 f(x) = \alpha(x_0,x_1) + i_x\beta(x_0,x_1),\quad \forall x = x_0 + i_x x_1 \in U\end{equation}
and the functions $\alpha$ and $\beta$, which take values in $\H$, satisfy the compatibility condition
\begin{equation}\label{SymCond}
\alpha(x_0,-x_1) = \alpha(x_0,x_1)\qquad \beta(x_0,-x_1) = - \beta(x_0,x_1)
\end{equation}
and the Cauchy-Riemann-system
\begin{equation}\label{CR}\begin{split}
\frac{\partial}{\partial x_0} \alpha(x_0,x_1) &= \frac{\partial}{\partial x_1} \beta(x_0,x_1)\\
\frac{\partial}{\partial x_0}\beta(x_0,x_1) &= -\frac{\partial}{\partial x_1}\alpha(x_0,x_1).
\end{split}\end{equation}
If in addition the functions $\alpha$ and $\beta$ take values in $\R$, then $f$ is called intrinsic.

The sets of slice hyperholomorphic and intrinsic functions on $U$ are denoted by $\lhol(U)$ and $\intrin(U)$, respectively.
\end{definition}

\begin{remark}
In the literature slice hyperholomorphic functions on $\H$ are often also called slice regular. Moreover, it is also possible to define the notion of right slice hyperholomorphicity, in which case the functions have the form $ f(x) = \alpha(x_0,x_1) + \beta(x_0,x_1)i_x,$
such that the functions $\alpha$ and $\beta$ satisfy \eqref{SymCond} and \eqref{CR}. This leads to an equivalent theory.
\end{remark}
Intrinsic functions play an important role in the theory of slice hyperholomorphic functions because composition and multiplication with such functions preserve slice hyperholomorphicity, which is in general not true.
\begin{corollary}
Let $U$ be an axially symmetric open set.
\begin{enumerate}[(i)]
\item If $f\in\intrin(U)$ and $g\in\lhol(U)$, then $fg\in\lhol(U)$.
\item if $g\in\intrin(U)$ and $f\in\lhol(g(U))$, then $f\circ g \in \lhol(U)$. 
\end{enumerate}
\end{corollary}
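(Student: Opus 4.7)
I would write both functions in their slice decomposition: $f(x)=\alpha_f(x_0,x_1)+i_x\beta_f(x_0,x_1)$ and $g(x)=\alpha_g(x_0,x_1)+i_x\beta_g(x_0,x_1)$. Because $f$ is intrinsic, $\alpha_f,\beta_f$ are real and commute with the $\H$-valued functions $\alpha_g,\beta_g$ and with $i_x$. Multiplying out and using $i_x^2=-1$ yields
\[
(fg)(x)=\bigl(\alpha_f\alpha_g-\beta_f\beta_g\bigr)+i_x\bigl(\alpha_f\beta_g+\beta_f\alpha_g\bigr),
\]
so I set $\alpha:=\alpha_f\alpha_g-\beta_f\beta_g$ and $\beta:=\alpha_f\beta_g+\beta_f\alpha_g$. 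I then verify \eqref{SymCond} by substituting $x_1\mapsto -x_1$ and noting that each $\alpha_{\bullet}$ is even and each $\beta_{\bullet}$ is odd, so the two minus signs multiply out in $\alpha$ and cancel one of the terms in $\beta$. For \eqref{CR} I differentiate via the Leibniz rule and replace each $\partial_{x_0}\alpha_\bullet$ by $\partial_{x_1}\beta_\bullet$ and each $\partial_{x_0}\beta_\bullet$ by $-\partial_{x_1}\alpha_\bullet$; the two Cauchy--Riemann identities for $\alpha,\beta$ then drop out after grouping. Real differentiability is immediate from the product rule.

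\textbf{Plan for (ii).} The first point to settle is that $g(U)$ is itself axially symmetric, so that $\lhol(g(U))$ is meaningful. Since $g$ is intrinsic, $\alpha_g,\beta_g$ are real, hence for every $j\in\S$ one has $g(x_0+jx_1)=\alpha_g(x_0,x_1)+j\beta_g(x_0,x_1)$; comparing this with the description of $[g(x)]$ (and using the axial symmetry of $U$ to get $x_0+jx_1\in U$) shows $[g(x)]\subset g(U)$. With this in hand, given $x=x_0+i_xx_1\in U$, the point $g(x)=\alpha_g+i_x\beta_g$ lies in $\C_{i_x}$, so by the symmetry conditions applied to $f$ (which make the slice representation valid for either sign of the imaginary coefficient) one obtains
\[
f\bigl(g(x)\bigr)=\alpha_f\bigl(\alpha_g(x_0,x_1),\beta_g(x_0,x_1)\bigr)+i_x\,\beta_f\bigl(\alpha_g(x_0,x_1),\beta_g(x_0,x_1)\bigr).
\]
I define $A,B$ to be the two compositions. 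The symmetry condition \eqref{SymCond} for $A,B$ follows at once: sending $x_1\mapsto -x_1$ changes $\alpha_g\mapsto\alpha_g$ and $\beta_g\mapsto -\beta_g$, and then the symmetry of $f$ gives $A$ even and $B$ odd in $x_1$.

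\textbf{Main step: the Cauchy--Riemann system for $A,B$.} By the chain rule,
\[
\partial_{x_0}A=(\partial_{y_0}\alpha_f)\,\partial_{x_0}\alpha_g+(\partial_{y_1}\alpha_f)\,\partial_{x_0}\beta_g,\qquad
\partial_{x_1}B=(\partial_{y_0}\beta_f)\,\partial_{x_1}\alpha_g+(\partial_{y_1}\beta_f)\,\partial_{x_1}\beta_g.
\]
Applying \eqref{CR} for $f$ to rewrite the $\beta_f$-derivatives in terms of $\alpha_f$-derivatives, and then \eqref{CR} for $g$ to convert $\partial_{x_1}$ on $\alpha_g,\beta_g$ into $\partial_{x_0}$-derivatives, both expressions collapse to the same quantity, giving $\partial_{x_0}A=\partial_{x_1}B$; the identity $\partial_{x_0}B=-\partial_{x_1}A$ is obtained by the same two-step substitution. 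Real differentiability of $f\circ g$ is again automatic from the chain rule.

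\textbf{Expected difficulties.} The calculations themselves are routine once the slice representations are written down. The only delicate point is the bookkeeping in part (ii): one must know that the slice form $\alpha_f+i\beta_f$ is valid even when $\beta_g<0$ (which is exactly what the symmetry condition \eqref{SymCond} is for), and one must first establish the axial symmetry of $g(U)$ in order for $f\in\lhol(g(U))$ to make sense in the first place. Everything else is the coordinated application of the two Cauchy--Riemann systems.
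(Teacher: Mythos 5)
The paper states this corollary without proof; it is recalled as a standard preliminary from the slice hyperholomorphic literature, so there is no in-paper argument to compare against. Your direct verification via the decomposition $\alpha+i_x\beta$ is the standard one and is correct: in (i) the reality of $\alpha_f,\beta_f$ is exactly what lets $i_x$ commute past them (note the order $fg$ with the intrinsic factor on the left matters for this), and in (ii) you correctly identify the two genuinely delicate points, namely that $[g(x)]\subset g(U)$ so that $\lhol(g(U))$ makes sense, and that the identity $f(g(x))=\alpha_f(\alpha_g,\beta_g)+i_x\beta_f(\alpha_g,\beta_g)$ requires the symmetry condition \eqref{SymCond} for $f$ to handle the case $\beta_g<0$, where $i_{g(x)}=-i_x$. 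The only cosmetic slip is the remark that the sign change "cancels one of the terms" of $\beta$ in (i); in fact both terms of $\alpha_f\beta_g+\beta_f\alpha_g$ flip sign, which is what gives oddness.
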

An identity principle with a slice wise condition holds true, but only on axially symmetric slice domains.
\begin{theorem}[Identity Principle]\label{IDPrinciple}
Let $U$ be an axially symmetric slice domain, let $f\in\lhol(U)$ and let $\mathcal{Z}_f$ be the set of zeros of $f$. If there exists $i\in\S$ such that $\mathcal{Z}_{f,i} := \mathcal{Z}_{f}\cap\C_i$ has an accumulation point in $U_i:=U\cap\C_i$, then $f\equiv 0$.
\end{theorem}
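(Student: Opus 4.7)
The plan is to reduce the statement to the classical identity principle for holomorphic functions of one complex variable, first on the single slice $U_i$ where the hypothesis lives, and then on each other slice via the real axis.

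First I would perform a splitting-lemma-type argument on $U_i$. Fix $j\in\S$ with $j\perp i$ so that $\H = \C_i + \C_i j$, and decompose the $\H$-valued components appearing in \eqref{LSHol} as $\alpha = \alpha_1 + \alpha_2 j$ and $\beta = \beta_1 + \beta_2 j$ with each $\alpha_m,\beta_m$ taking values in $\C_i$. Setting $f_m := \alpha_m + i\beta_m$, the representation \eqref{LSHol} restricted to $U_i$ (parametrized by $x = x_0 + i x_1$) reads $f = f_1 + f_2 j$. Separating the Cauchy--Riemann system \eqref{CR} into its $\C_i$-components yields exactly the classical Cauchy--Riemann equations for $f_1$ and $f_2$ as functions $U_i \to \C_i$, so both are holomorphic on the connected planar domain $U_i$. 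Since $\{1,j\}$ is right $\C_i$-linearly independent, a point $x\in U_i$ is a zero of $f$ if and only if it is a common zero of $f_1$ and $f_2$; hence the accumulation point of $\mathcal{Z}_{f,i}$ in $U_i$ postulated in the hypothesis is an accumulation point of the zero set of each $f_m$. The classical identity principle then forces $f_1 \equiv f_2 \equiv 0$, so $f|_{U_i}\equiv 0$.

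To propagate the vanishing to the other slices I would invoke the slice-domain hypothesis: $U\cap\R$ is a nonempty open subset of $\R$ contained in every slice $U_k$, $k\in\S$. Since $f$ already vanishes on $U_i \supset U\cap\R$, the zero set of $f|_{U_k}$ accumulates at every point of $U\cap\R$ inside the domain $U_k$. Reapplying the splitting argument on $U_k$ (with a new $j'\perp k$) and invoking the classical identity principle once more give $f \equiv 0$ on $U_k$ for every $k \in \S$, and hence $f \equiv 0$ on $U = \bigcup_{k\in\S} U_k$.

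The main obstacle is really the initial splitting, which is slightly subtle because $\alpha$ and $\beta$ are a priori $\H$-valued rather than $\C_i$-valued; once the decomposition is in place, everything else follows cleanly from the classical identity principle applied twice. The slice-domain hypothesis cannot be weakened: without $U\cap\R\neq\emptyset$ there is no bridge between the different slices, and the statement is known to fail for merely axially symmetric open sets that miss the real axis.
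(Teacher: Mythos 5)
The paper does not actually prove this statement: it is recalled in the Preliminaries as a known result from the literature on slice hyperholomorphic functions, so there is no internal argument to compare yours against. Your proof is correct and is the standard one — the splitting $f_i=f_1+f_2j$ into $\C_i$-valued holomorphic components (which you re-derive, though you could simply invoke \Cref{SplitLem}), the classical identity principle on the connected planar domain $U_i$, and propagation to every other slice through the nonempty open set $U\cap\R$. The only point worth adding is that the second step admits a shortcut: once $f\equiv 0$ on $U_i$, the Representation Formula \eqref{Reppp} gives $f(x)=\frac12(1-i_xi)f(x_i)+\frac12(1+i_xi)f(\overline{x_i})=0$ for every $x\in U$ at once, with no second appeal to the splitting argument; your route via the real axis is equally valid and has the merit of showing exactly where the slice-domain hypothesis is used, and your closing remark that the statement fails on axially symmetric open sets missing $\R$ is also correct (e.g.\ $f(x)=1+i_xi$ on such a set vanishes on one half-slice only).
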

The most important examples of slice hyperholomorphic functions are polynomials and power series with coefficients on the right, i.e. of the form $f(x) = \sum_{n=0}^{+\infty} x^na_n$ with $a_n\in\H$. A power series of this form is intrinsic if and only if the coefficients are real. 

Conversely, any slice hyperholomorphic function can be expanded into a power series at any real point.
\begin{definition}
Let $f\in\lhol(U)$. The slice derivative of $f$ is the function defined by
\begin{equation}\label{sderiv}
 \sderiv f(s) := \lim_{\C_{i_s}\ni p\to s} (p-s)^{-1}(f(p)-f(s)),
\end{equation}
where $\lim_{\C_{i_s}\ni p \to  s}g(p)$ denotes the limit of $g(p)$ as $p$ tends to $s$ in $\C_{i_s}$.
\end{definition}
\begin{corollary}
If $f\in\lhol(U)$ then $\sderiv f \in \lhol(U)$ and $\sderiv f (s) = \frac{\partial}{\partial s_0} f(s)$. 
\end{corollary}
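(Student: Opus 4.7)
The plan is to prove both claims by direct computation from the structural decomposition $f(x) = \alpha(x_0,x_1) + i_x \beta(x_0,x_1)$ of \Cref{LHolDef} together with the Cauchy--Riemann system \eqref{CR}.

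First I would verify the formula $\sderiv f(s) = \partial_{s_0} f(s)$ by showing that the limit \eqref{sderiv} exists and equals $\partial_{x_0}\alpha(s_0, s_1) + i_s \partial_{x_0}\beta(s_0, s_1)$. Fix $s = s_0 + i_s s_1 \in U$ and write $p = (s_0 + a) + i_s (s_1 + b) \in \C_{i_s}$ with $(a,b) \to (0,0)$. Since $p \in \C_{i_s}$, slice hyperholomorphicity gives $f(p) = \alpha(s_0+a, s_1+b) + i_s \beta(s_0+a, s_1+b)$. Taylor-expanding $\alpha$ and $\beta$ at $(s_0, s_1)$ to first order and using $(p-s)^{-1} = (a - i_s b)/(a^2 + b^2)$, the difference quotient becomes a sum of terms that I would simplify via the CR relations $\partial_{x_0}\alpha = \partial_{x_1}\beta$ and $\partial_{x_0}\beta = -\partial_{x_1}\alpha$ together with $i_s^2 = -1$. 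The subtlety is that $\alpha$ and $\beta$ are $\H$-valued and do not commute with $i_s$, so left-multiplications by $i_s$ must be kept in place; however, the CR identities are precisely tailored so that the cross terms cancel and the numerator collapses to $(a^2 + b^2)\bigl(\partial_{x_0}\alpha + i_s \partial_{x_0}\beta\bigr)(s_0,s_1)$, yielding the claimed limit (which in particular is independent of the direction of approach).

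Next, to show $\sderiv f \in \lhol(U)$, set $A := \partial_{x_0}\alpha$ and $B := \partial_{x_0}\beta$. By the formula just derived, $\sderiv f(x) = A(x_0, x_1) + i_x B(x_0, x_1)$, which is already of the form \eqref{LSHol}. It then remains to check that $(A,B)$ satisfies the symmetry condition \eqref{SymCond} and the CR system \eqref{CR}. The symmetry identities $A(x_0, -x_1) = A(x_0, x_1)$ and $B(x_0, -x_1) = -B(x_0, x_1)$ follow by differentiating \eqref{SymCond} in $x_0$. The CR equations for $(A, B)$ amount to $\partial_{x_0}^2 \alpha = \partial_{x_1}\partial_{x_0}\beta$ and $\partial_{x_0}^2 \beta = -\partial_{x_1}\partial_{x_0}\alpha$, both of which reduce, by one application of the original CR identities and interchange of the order of partial derivatives, to tautologies.

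The main obstacle is the interchange of partial derivatives in the previous step, which requires $C^2$-regularity of $\alpha$ and $\beta$. \Cref{LHolDef} only assumes real differentiability, but it is a standard fact (visible from the local power series expansion at real points referenced just before the definition of the slice derivative) that slice hyperholomorphic functions are in fact $C^\infty$, indeed real analytic, on axially symmetric slice domains; this regularity legitimizes the manipulations above.
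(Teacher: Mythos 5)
The paper states this corollary without proof, as one of the standard facts recalled in the preliminaries, so there is no in-paper argument to compare against; judged on its own, your proof is correct and is the standard direct verification. The key computation checks out: with $p-s=a+i_sb$ and the first-order Taylor expansion of $\alpha,\beta$, the Cauchy--Riemann relations make the numerator collapse to $(a^2+b^2)\bigl(\partial_{x_0}\alpha+i_s\partial_{x_0}\beta\bigr)$, since $a,b$ are real and all occurrences of $i_s$ sit on the left; and the pair $(\partial_{x_0}\alpha,\partial_{x_0}\beta)$ inherits \eqref{SymCond} and \eqref{CR} by differentiating in $x_0$. Two small points deserve tightening. First, at a real point $s$ the well-definedness of \eqref{sderiv} requires independence of the \emph{choice of} $i_s$, not just of the direction of approach within a fixed slice; your formula handles this automatically because $\beta(x_0,0)=0$ forces $\partial_{x_0}\beta(s_0,0)=0$, but it is worth saying. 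Second, the $C^2$-regularity needed for Clairaut's theorem is better justified via \Cref{SplitLem}: on each slice $f_i=f_1+f_2j$ with $f_1,f_2$ genuinely holomorphic, hence real analytic, and $\alpha,\beta$ are recovered from $f_i(x_0+ix_1)$ and $f_i(x_0-ix_1)$; this works on any axially symmetric open set, whereas the power series expansion at real points you invoke is unavailable when $U$ does not meet the real axis (a case \Cref{LHolDef} deliberately allows, cf.\ \Cref{PolyExt}).
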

\begin{remark}
Observe that $\sderiv f(s)$ is well defined: since it coincides with the derivative with respect to the real part of the variable, it does not depend on the choice of the imaginary unit $i_s$ in \eqref{sderiv} if $s$ is real.
\end{remark}
We also adopt the following notation:
\begin{definition}
Let $f$ be a function defined on a set $U\subset\H$  and let $i\in\S$. We denote the restriction of $f$ to the complex plane $\C_i$ by $f_i$, i.e. $f_i := f|_{U\cap \C_i}$.
\end{definition}
\begin{remark}
In this paper, a subscript $i$ always indicates the restriction of a function, set or variable to the complex plane $\C_i$ that is determined by the imaginary unit $i$. In order to avoid confusion, we do not denote indices in sums etc. by the symbol $i$ (neither by $j$ or $k$, which also refer to imaginary units). Moreover, we denote the imaginary unit in the usual complex field by $\imath$ in order to distinguish it from imaginary units in the field of quaternions.
\end{remark}
Using the above notation we can formulate the so called Splitting Lemma.
\begin{lemma}[Splitting Lemma]\label{SplitLem}
Let $U\subset\H$ be axially symmetric and let $i,j\in\S$ with $i\perp j$.
 If $f\in\lhol(U)$, then there exist holomorphic functions
$f_1,f_2: U\cap\C_i\to\C_i$ such that $ f_i = f_1 + f_2j$.
\end{lemma}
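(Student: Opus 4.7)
The plan is to exploit the real direct-sum decomposition $\H = \C_i \oplus \C_i j$ recalled in the preliminaries. Writing the coefficient functions $\alpha$ and $\beta$ of the slice hyperholomorphic form \eqref{LSHol} as $\alpha = \alpha_1 + \alpha_2 j$ and $\beta = \beta_1 + \beta_2 j$ with $\C_i$-valued components $\alpha_k,\beta_k : \R^2 \to \C_i$, I would then define $f_k := \alpha_k + i\beta_k$ for $k=1,2$ and verify the two requirements of the statement: that $f_i = f_1 + f_2 j$ pointwise on $U\cap\C_i$, and that each $f_k$ is holomorphic from $U\cap\C_i$ to $\C_i$.

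For the first point, a short preliminary observation is that for every $x = x_0 + i y \in U\cap\C_i$ with $y\in\R$ of arbitrary sign one still has $f(x) = \alpha(x_0,y) + i\,\beta(x_0,y)$. When $y\geq 0$ this is just \eqref{LSHol}; when $y<0$ one has $i_x = -i$ and $x_1 = -y$, so the symmetry condition \eqref{SymCond} converts \eqref{LSHol} into the same expression. Substituting the component decompositions of $\alpha$ and $\beta$ and using commutativity inside $\C_i$ (so that $i\beta_k\in\C_i$), while keeping $j$ on the right, immediately gives the decomposition $f_i = f_1 + f_2 j$.

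For holomorphicity, the Cauchy-Riemann system \eqref{CR} decouples componentwise under $\H = \C_i \oplus \C_i j$, because $\partial_{x_0}$ and $\partial_{x_1}$ are real linear and respect the direct-sum splitting. Projecting \eqref{CR} onto the $\C_i$- and $\C_i j$-summands therefore yields the classical complex Cauchy-Riemann equations for each pair $(\alpha_k,\beta_k)$, which is exactly what is needed for $f_k$ to be holomorphic. I do not expect any serious obstacle: the main care required is in the bookkeeping of multiplications, namely that $j$ stays on the right while $i$ acts from the left on $\beta$, together with the small sign-of-$y$ verification above to ensure that the representation used for $f$ on $\C_i$ is genuinely independent of the orientation chosen for $i_x$.
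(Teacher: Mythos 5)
Your proof is correct. The paper itself states the Splitting Lemma as a recalled background result and gives no proof, so there is nothing internal to compare against; your argument --- decomposing the coefficient functions over $\H=\C_i\oplus\C_i j$, checking via \eqref{SymCond} that $f(x_0+iy)=\alpha(x_0,y)+i\beta(x_0,y)$ for $y$ of either sign, and projecting the Cauchy--Riemann system \eqref{CR} onto the two real-linear summands so that each $f_k=\alpha_k+i\beta_k$ is annihilated by $\tfrac12(\partial_{x_0}+i\partial_{x_1})$ --- is precisely the standard proof found in the literature the paper cites, and all the points needing care (the sign of $y$, keeping $j$ on the right, commutativity within $\C_i$) are handled correctly.
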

Another important result is the representation formula, which allows to reconstruct the function $f$ from its values on a single complex plane.
\begin{theorem}[Representation Formula]\label{RepFo}
Let $U\subset\H$ be axially symmetric and let $i\in\S$. For any $x = x_0 + i_x x_1\in U$ set $x_i := x_0 + ix_1$. If $f\in\lhol(U)$. Then
\begin{equation}\label{Reppp}
f(x) = \frac12(1-i_xi)f(x_i) + \frac12(1+i_xi)f(\overline{x_i}).
\end{equation}
\end{theorem}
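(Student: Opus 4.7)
The plan is to unfold the definition of slice hyperholomorphicity and verify the identity by direct algebraic computation, using the compatibility condition \eqref{SymCond} to handle the reflected point $\overline{x_i}$. The formula is a purely pointwise identity in $\H$, so no global/analytic machinery (such as the Identity Principle) is needed; the only subtle point is keeping track of the non-commutativity of $\H$.

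First, fix $x = x_0 + i_x x_1 \in U$ with $x_1 \geq 0$ and fix $i \in \S$. By \Cref{LHolDef}, writing $\alpha := \alpha(x_0,x_1)$ and $\beta := \beta(x_0,x_1)$, one has
\[ f(x) = \alpha + i_x \beta. \]
Now the key observation is how $f$ restricted to the plane $\C_i$ evaluates at $x_i$ and $\overline{x_i}$. At $x_i = x_0 + ix_1$ the formula gives directly $f(x_i) = \alpha + i\beta$. At $\overline{x_i} = x_0 + i(-x_1)$, one applies \eqref{LSHol} with imaginary unit $i$ and second variable $-x_1$, and then uses the compatibility condition \eqref{SymCond} $\alpha(x_0,-x_1)=\alpha(x_0,x_1)$ and $\beta(x_0,-x_1)=-\beta(x_0,x_1)$ to conclude $f(\overline{x_i}) = \alpha - i\beta$.

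The remainder is a short computation in $\H$. Plugging the two expressions into the right-hand side of \eqref{Reppp} and expanding, one uses $i_x i \cdot i = -i_x$ to obtain
\[ (1-i_x i)(\alpha + i\beta) + (1+i_x i)(\alpha - i\beta) = 2\alpha + 2i_x\beta, \]
so that half of the sum equals $\alpha + i_x\beta = f(x)$, as required.

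The only potential obstacle is the case when $x$ is real, i.e.\ $x_1 = 0$, in which $i_x$ is not canonically defined. But then $x = x_i = \overline{x_i}$, and the compatibility condition forces $\beta(x_0,0)=-\beta(x_0,0)=0$, so $f(x)=\alpha(x_0,0)$ and the right-hand side collapses to $\tfrac12 f(x)+\tfrac12 f(x) = f(x)$ regardless of the choice of $i_x$. Thus the formula holds on all of $U$, and the arbitrariness in the choice of $i_x$ at real points is harmless.
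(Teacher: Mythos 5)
Your computation is correct: under \Cref{LHolDef} the function is \emph{defined} by $f(x)=\alpha(x_0,x_1)+i_x\beta(x_0,x_1)$ with the symmetry conditions \eqref{SymCond}, so $f(x_i)=\alpha+i\beta$, $f(\overline{x_i})=\alpha-i\beta$, and the identity \eqref{Reppp} follows from the two-line quaternionic expansion you give (the real-point case is handled correctly as well). The paper itself states \Cref{RepFo} as a recalled preliminary and offers no proof, so there is nothing to compare against; your argument is the standard one for this form of the definition, for which the representation formula is essentially built in (cf.\ \Cref{PolyExt}, which notes that only under the older definition via \eqref{SHOr} does the formula require a genuine proof, and then only on axially symmetric slice domains).
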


As a consequence of the representation formula, every $\H$-valued holomorphic function on $\C_i$ that is defined on a suitable domain has a slice hyperholomorphic extension.
\begin{lemma}[Extension Lemma]\label{ExtLem}
Let $O\subset \C_i$ be symmetric with respect to the real axis and let $[O]$ be its axially symmetric hull, i.e.
\[ [O] := \{ x_0 + i_x x_1: x_0 + i x_1\in O, i_x\in\S\}.\]
If $f: O \to \H$ satisfies
\begin{equation}\label{SHOr}
 \frac{1}{2}\left(\frac{\partial}{\partial x_0}f(x)  + i \frac{\partial}{\partial x_1}f(x)\right) = 0,
\end{equation}
then there exists a unique slice hyperholomorphic extension $\ext(f)$ of $f$ to $[O]$.
\end{lemma}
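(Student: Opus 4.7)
The plan is to exhibit the extension explicitly through the Representation Formula (\Cref{RepFo}) and then read off its slice hyperholomorphicity from the defining conditions of \Cref{LHolDef}; uniqueness will follow at once from the same formula. For $x = x_0 + i_x x_1 \in [O]$ set $x_i := x_0 + ix_1$. Since $O$ is symmetric with respect to the real axis, both $x_i$ and $\overline{x_i}$ lie in $O$, so
\[\ext(f)(x) := \tfrac{1}{2}(1-i_xi)f(x_i) + \tfrac{1}{2}(1+i_xi)f(\overline{x_i})\]
is a well-defined element of $\H$. At real points $x = x_0$ one has $x_1 = 0$, whence $x_i = \overline{x_i} = x_0$ and the right-hand side reduces to $f(x_0)$ independently of the (ambiguous) choice of $i_x$; this guarantees the definition is well-posed on all of $[O]$. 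Taking $i_x = i$ then shows directly that $\ext(f)|_O = f$.

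To verify slice hyperholomorphicity I would rewrite $\ext(f)(x) = \alpha(x_0, x_1) + i_x \beta(x_0, x_1)$ with
\[\alpha(x_0,x_1) := \tfrac{1}{2}\bigl(f(x_i)+f(\overline{x_i})\bigr),\qquad \beta(x_0,x_1) := -\tfrac{i}{2}\bigl(f(x_i)-f(\overline{x_i})\bigr).\]
The compatibility conditions \eqref{SymCond} are immediate, as $x_1 \mapsto -x_1$ exchanges $x_i$ and $\overline{x_i}$. For the Cauchy--Riemann system \eqref{CR} I would regard $f$ as a function of the real variables $(x_0, x_1)$ and rewrite the hypothesis \eqref{SHOr} equivalently as $\partial_{x_1} f = i\,\partial_{x_0} f$. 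Differentiating $\alpha$ and $\beta$ termwise (the $\overline{x_i}$-branch picks up a sign in $x_1$ by the chain rule) and substituting this identity, the relation $i^2 = -1$ collapses $\partial_{x_1}\beta$ to $\partial_{x_0}\alpha$ and $\partial_{x_1}\alpha$ to $-\partial_{x_0}\beta$, which yields \eqref{CR}.

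Finally, uniqueness is a direct consequence of \Cref{RepFo}: any $g \in \lhol([O])$ extending $f$ satisfies
\[g(x) = \tfrac{1}{2}(1-i_xi)g(x_i) + \tfrac{1}{2}(1+i_xi)g(\overline{x_i}) = \tfrac{1}{2}(1-i_xi)f(x_i) + \tfrac{1}{2}(1+i_xi)f(\overline{x_i}) = \ext(f)(x),\]
because $g$ agrees with $f$ on the slice $O\subset\C_i$. I do not foresee any real obstacle; the only delicate point is the consistency of the formula at real points, where $i_x$ is not uniquely determined, but this is automatically settled by the oddness of $\beta$ in $x_1$.
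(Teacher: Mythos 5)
Your proof is correct and follows exactly the route the paper intends: the Extension Lemma is stated there as a recalled preliminary without proof, and the remark immediately after it notes that the extension is ``obviously given by \eqref{Reppp}'', which is precisely the formula you construct and verify. Your checks of \eqref{SymCond} and \eqref{CR} for the resulting $\alpha$ and $\beta$, and the uniqueness argument via \Cref{RepFo}, are all sound.
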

\begin{remark}
The slice hyperholomorphic extension of a function $f$ is obviously given by \eqref{Reppp}. If however $f$ is a power series $f(z) = \sum_{n=0}^{+\infty}z^na_n$ with $a_n\in\H$, then $\ext(f)$ is simply obtained by extending the variable from a complex to a quaternionic one, i.e. $\ext(f)(x) = \sum_{n=0}^{+\infty}s^na_n$.
\end{remark}
\begin{remark}\label{PolyExt}
Originally, slice hyperholomorphic functions were defined as functions that satisfied \eqref{SHOr} for all $i\in\S$. With this definition, the representation formula holds true only on axially symmetric slice domains, where it implies that such functions can be represented in the form \eqref{LSHol}. Most important results for slice hyperholomorphic functions are however based on the representation formula such that the theory was often only developed on axially symmetric slice domains. \Cref{LHolDef} in contrast includes in general less functions, but it allows to develop the theory also for functions, whose domains are not connected or do not intersect the real line. This is in particular important in operator theory. On axially symmetric slice domains both definitions are equivalent.
\end{remark}

Finally, we recall some results on Hardy spaces of slice hyperholomorphic functions from \cite{HardyRef}.
\begin{definition}
Let $0<p<+\infty$ and let $\D$ denote the unit ball in $\H$. The Hardy space $\hardy^p(\D)$ of slice hyperholomorphic functions on $\D$ consists of all functions $f\in\lhol(\D)$ such that 
\begin{equation}\label{hardyDef}
\| f\|_{\hardy^p(\D)} :=\sup_{i\in\S} \lim_{r\nearrow 1} \left(\frac{1}{2\pi}\int_{0}^{2\pi} \left| f\left(re^{i\theta}\right)\right|^p\,d\theta\right)^{\frac{1}{p}} < +\infty. 
\end{equation}
\end{definition}
For $1\leq p < + \infty$, the space $\hardy^p(\D)$ is a quaternionic Banach space. Moreover, due to the representation formula, it is sufficient that  
\[\lim_{r\nearrow 1} \left(\frac{1}{2\pi}\int_{0}^{2\pi} \left| f\left(re^{i\theta}\right)\right|^p\,d\theta\right)^{\frac{1}{p}} < +\infty\]
 for one $i\in\S$ in order to have $f\in\hardy^p(\D)$. Finally, a slice hyperholomorphic function belongs to this space if and only if its components obtained from the Splitting Lemma belong to the respective complex hardy space as the following lemma shows.
\begin{lemma}\label{HardySplit}
Let $f\in\lhol(\D)$, let $i,j\in\S$ with $i\perp j$ and write $f_i = f_1 + f_2j$ with holomorphic components $f_1,f_2$ according to \Cref{SplitLem}. For $0<p<+\infty$, we have $f\in\hardy^p(\D)$ if and only if $f_1,f_2$ belong to the complex Hardy space $\hardy^p_{\C}(\D_i)$ of holomorphic functions on the unit ball $\D_i$ in the complex plane $\C_i$.
\end{lemma}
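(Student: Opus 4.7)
The plan is to exploit the fact that since $j\in\S$ with $i\perp j$, the elements $1,j$ form an orthonormal basis of $\H$ as a right $\C_i$-module. Consequently, for $z\in\D_i$ we have the pointwise identity
\[
|f_i(z)|^2 = |f_1(z) + f_2(z)j|^2 = |f_1(z)|^2 + |f_2(z)|^2,
\]
because $f_1(z),f_2(z)\in\C_i$ and the product $f_2(z)j$ lies in the orthogonal complement of $\C_i$ inside $\H$.

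From here I would derive the elementary pointwise comparison: there exist constants $c_p, C_p > 0$ (depending only on $p$) such that
\[
c_p\bigl(|f_1(z)|^p + |f_2(z)|^p\bigr) \,\leq\, |f_i(z)|^p \,\leq\, C_p\bigl(|f_1(z)|^p + |f_2(z)|^p\bigr),
\]
which follows from $\max(|f_1|,|f_2|)\leq |f_i| \leq \sqrt{2}\max(|f_1|,|f_2|)$ by raising to the $p$-th power. Substituting $z = re^{i\theta}$ and integrating in $\theta$ over $[0,2\pi]$, this pointwise inequality transfers to a comparison of the integral means appearing in \eqref{hardyDef} and in the definition of the complex Hardy norms $\|f_k\|_{\hardy^p_{\C}(\D_i)}$ for $k=1,2$.

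For the forward direction, assume $f\in\hardy^p(\D)$. Then in particular the limit in \eqref{hardyDef} corresponding to this fixed $i\in\S$ is finite, so the left inequality above combined with monotonicity of the integral shows that $\lim_{r\nearrow 1}\frac{1}{2\pi}\int_0^{2\pi}|f_k(re^{i\theta})|^p\,d\theta$ is finite for $k=1,2$, giving $f_1,f_2\in\hardy^p_{\C}(\D_i)$. For the converse, assume $f_1,f_2\in\hardy^p_{\C}(\D_i)$; the right inequality yields finiteness of the integral mean of $|f_i(re^{i\theta})|^p$ as $r\nearrow 1$ on the single complex plane $\C_i$. By the remark immediately following the definition of $\hardy^p(\D)$---namely, that finiteness of this limit for a single imaginary unit suffices for membership in $\hardy^p(\D)$---we conclude $f\in\hardy^p(\D)$.

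There is no serious obstacle here; the whole argument rests on the orthogonal splitting $\H=\C_i\oplus\C_i j$ together with the fact (already quoted from \cite{HardyRef}) that one does not need to verify \eqref{hardyDef} on every slice. The only minor care required is in bookkeeping the constants in the elementary $\ell^2$-vs-$\ell^p$ comparison, which must be handled uniformly for all $p\in(0,+\infty)$ and in particular is slightly different in the regimes $p\leq 2$ and $p\geq 2$ but in either case is completely standard.
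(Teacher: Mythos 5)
Your argument is correct. Note that the paper itself gives no proof of this lemma: it is recalled from \cite{HardyRef} as a known result. Your derivation via the orthogonality identity $|a_1+a_2j|^2=|a_1|^2+|a_2|^2$, the resulting two-sided comparison of $|f_i|^p$ with $|f_1|^p+|f_2|^p$, and the single-slice sufficiency remark following \eqref{hardyDef} is exactly the splitting technique the paper deploys for the analogous BMO and VMO statements (cf.\ \eqref{aai} and \eqref{AGH} in the proof of \Cref{BMONorms}), so it is a faithful reconstruction of the standard proof; the only point worth making explicit is that the limits in \eqref{hardyDef} exist because the integral means of $|f_k|^p$ (and of $|f_i|^p$, since $\log|f_i|=\tfrac12\log(|f_1|^2+|f_2|^2)$ is subharmonic) are nondecreasing in $r$, so boundedness indeed yields finiteness of the limit.
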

One moreover has for $p\geq 1$ that
\begin{equation}\label{HardySplitEst}
\| f_{\ell}\|_{\hardy_{\C}^p(\D_i)} \leq \|f\|_{\hardy^{p}(\D)} \lesssim \| f_{1}\|_{\hardy_{\C}^p(\D_i)}+ \| f_{1}\|_{\hardy_{\C}^p(\D_i)}.
\end{equation}

\section{Carleson measures on $\D$}\label{CarlMeasSec}
We introduce now the concept of Carleson measures on $\D$ and discuss them in the slice hyperholomorphic setting. These measures were introduced in \cite{Arcozzi} in order to study Hankel operators on the Hardy space $\hardy^2(\D)$. They were also studied in the context of general Hardy and Bergman spaces on $\D$ in \cite{sabsar}.
\begin{definition}
For $h>0$, $\theta_0\in[0,2\pi)$ and $i\in\S$ let $S_i(\theta_0,h)$ be the Carleson box in the plane $\C_i$ defined by
\[ S_i(\theta_0,h) := \left\{ re^{i\theta}\in\D_i\ :\ |\theta-\theta_0|\leq h, 1-h\leq r\leq 1\right\}.\]
The set
\[ S(\theta_0,h) := \bigcup_{i\in\S}S_i(\theta_0,h)\] 
is called a symmetric box.
\end{definition}

\begin{definition}
A nonnegative Radon measure $\mu$ on $\D$ is called a Carleson measure if there exists a constant $C>0$ such that  for all $h>0$ and all $ 0\leq \theta_0 \leq \pi$
\begin{equation}\label{CB}
 \mu(S(\theta_0,h)) \leq C h.
\end{equation}
\end{definition}
We recall an observations made in \cite{Arcozzi}: a finite Radon measure $\mu$ on the unit ball $\D$ can be decomposed uniquely as $\mu = \mu_\R + \tilde{\mu}$ such that $\supp(\mu_\R)\subset \D\cap\R$ and $\tilde{\mu}(\D\cap\R) = 0$. If $\nu$ is the measure on the Borel sets $\borel(\S)$ of the sphere $\S$ that is defined by 
\[\nu(E) := \tilde{\mu}\left(\{ x_0+ix_1 \in \D_i^+: i\in E\}\right)\qquad \forall E\in\borel(\S),\]
with $\D_i^+ = \{x_0 + i x_1\in\D: x_1> 0\}$, then the Disintegration Theorem \cite[Theorem~2.28]{DisIntBook} implies the existence of a family of probability measures $(\mu_i^+)_{i\in\S}$ such that $\mu_i^+$ is defined on $\D_i^+$ and such that 
\[ d\tilde{\mu}(x_0 + i x_1) = d\mu_i^+(x_0+ix_1)d\nu(i)\]
that is
\[ \int_{\D} f(x) \,d\tilde{\mu}(x) = \int_{\S}\left(\int_{\D_i^+} f(x)\,d\mu_i^+(x)\right)d\nu(i) \]
for all $f\in L^1(\D, \tilde{\mu})$. With this notation we have
\begin{equation}\label{BigIntID}
 \int_{\D} f(x) \, d\mu(x) = \int_{\R\cap\D} f(x)\, d\mu_{\R}(x) + \int_{\S}\left(\int_{\D_i^+}f(x)\,d\mu_i^{+}(x)\right)d\nu(i).
 \end{equation}
Observe that each of the measures $\mu_i^+$ defines a measure on the entire ball $\D_i$, which is obtained by setting
\[ \mu_i^+(E) := \mu_i^+(E\cap\D_i^+) \qquad\forall E\in\borel(\D_i).\]

\begin{definition}
Let $\mu$ be a nonnegative Radon measure on $\D$. A slice-de\-com\-position of $\mu$ is a triple $\left(\mu_{\R},\nu,(\mu_i^+)_{i\in\S}\right)$ consisting of a measure $\mu_\R$ on $\D$ with $\supp(\mu_{\R})\subset\R$, a measure $\nu$ on $\S$ and a family of probability measures $(\mu_{i}^+)_{i\in\S}$ such that $\mu_i^+$ is defined on $\D_i^+$ and such that $\mu$ can be decomposed as 
\[d\mu(x_0+ix_1)  = d\mu_{\R}(x_0+ix_1) + d\mu_{i}^+(x_0+ix_1)d\nu(i)\]
 as in the above discussion.
\end{definition}
\begin{remark}
We point out that the measures $\mu_i$ are not unique: if $\hat{\mu}_i^{+}$ is another family of measures with the above properties, then $\mu_i^{+} = \hat{\mu}_i^{+}$ only for $\nu$-allmost all $i$. Hence, we can replace $\mu_i^+$ by an arbitrary probability measure on $\D_i^+$ if $\nu(i) = 0$.
\end{remark}

\begin{remark}\label{ArcozziRem}
In \cite{Arcozzi} the authors do an additional step: they set  $\mu_{i} := \mu_{i}^+ + \mu_{-i}^+$ for each $i\in\S$ and obtain in this way a measure define on the entire unit ball $\D_i$ in $\C_i$. Moreover $\mu_{i} = \mu_{-i}$. Thus they write \eqref{BigIntID} as
\[ \int_{\D} f(x) \, d\mu(x) = \int_{\R\cap\D} f(x)\, d\mu_{\R}(x) + \frac{1}{2}\int_{\S}\left(\int_{\D_i}f(x)\,d\mu_i(x)\right)d\nu(i).\]
This formula holds if $\nu$ is invariant under the transformation $i\mapsto -i$ because then
\begin{align*}
&\int_{\S}\left(\int_{\D_i^+}f(x)\,d\mu_i^{+}(x)\right)d\nu(i) \\
=& \frac{1}{2}\int_{\S}\left(\int_{\D_i^+}f(x)\,d\mu_i^{+}(x)\right)d\nu(i) + \frac{1}{2}\int_{\S}\left(\int_{\D_{-i}^+}f(x)\,d\mu_{-i}^{+}(x)\right)d\nu(-i)\\
=&\frac{1}{2}\int_{\S}\left(\int_{\D_i^+}f(x)\,d\mu_i^{+}(x)\right)d\nu(i) + \frac{1}{2}\int_{\S}\left(\int_{\D_{-i}^+}f(x)\,d\mu_{-i}^{+}(x)\right)d\nu(i) \\
=& \frac{1}{2}\int_{\S}\left(\int_{\D_i}f(x)\,d\mu_i(x)\right)d\nu(i).
\end{align*}
If however $\nu$ is not invariant under the transformation $i\mapsto -i$ this formula does not hold. An easy counterexample is the measure $\mu = \delta_{a}$ with $a\in\D$ not real and $\delta_a$ denoting the Dirac measure at $a$. A slice decomposition of $\mu$ is then given by $\mu_{\R} = 0$, $\nu(i) = \delta_{i_a}$ and $\mu_i^+= \delta_{a}$ if $i = i_{a}$ and $\mu_{i}^+ = \delta_{b_i}$ with $b_i \in \D_i^+$ arbitrary if $i\neq i_a$. Then
\[ \int_{\S}\left(\int_{\D_{i}^+} f(x)\,d\mu_{i}^+(x)\right) d\nu(i) = \int_{\D_{i_a}^+} f(x)\,d\delta_{a}(x) = f(a)\]
but
\begin{gather*}
\frac{1}{2}\int_{\S}\left(\int_{\D_{i}} f(x)\,d\mu_{i}(x)\right) d\nu(i) \\
=  \frac{1}{2} \int_{\D_{i_a}} f(x) d(\delta_{a} + \delta_{b_{-i_{a}}}) = \frac{1}{2} f(a) + \frac{1}{2} f(b_{-i_{a}}). 
\end{gather*}
In this paper, we therefore chose to work directly with \eqref{BigIntID} in order to obtain more general statements.
\end{remark}

\begin{definition}
A slice-decomposition of a finite Radon measure $\mu$ is called a slice Carleson decomposition if the measures $\mu_i^+$ for $i\in\S$ and the measure $\mu_{\R}$ are Carleson measures on $\D_i$ with a common Carleson bound, that is if there exists a constant $C>0$ such that for all $h>0$ and all $0\leq \theta_0\leq \pi$ and all $i\in\S$
\begin{equation}\label{SliceCE}
 \mu_{\R}\left(S_i(\theta_0,h)\right) \leq C h \quad\text{and}\quad \mu_i^+\left(S_i(\theta_0,h)\right) \leq C h.
\end{equation}
\end{definition}

\begin{corollary}\label{SC->C}
If a finite Radon measure $\mu$ on $\D$ has a slice Carleson decomposition, then it is a Carleson measure.
\end{corollary}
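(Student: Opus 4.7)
I would bound $\mu(S(\theta_0,h))$ by splitting $\mu$ according to the slice decomposition \eqref{BigIntID} and handling the real-line part and the spherical part separately, in each case reducing to the slicewise Carleson estimate \eqref{SliceCE}.

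For the real-line part, the key observation is that membership of a real number $r$ in $S_i(\theta_0,h)$ depends only on $|\arg r-\theta_0|$ with $\arg r\in\{0,\pi\}$, a condition independent of the imaginary unit $i$. Consequently $S(\theta_0,h)\cap\R=S_i(\theta_0,h)\cap\R$ for every $i\in\S$, and since $\supp(\mu_{\R})\subset\R$, \eqref{SliceCE} gives $\mu_{\R}(S(\theta_0,h))\leq Ch$. For the spherical part, the disintegration yields
$$\tilde{\mu}(S(\theta_0,h))=\int_{\S}\mu_i^+\bigl(S(\theta_0,h)\cap\D_i^+\bigr)\,d\nu(i).$$
Because $\C_j\cap\C_i=\R$ whenever $j\neq\pm i$, only the planes $\C_{\pm i}$ contribute nontrivially to this intersection, so that $S(\theta_0,h)\cap\D_i^+\subseteq\bigl(S_i(\theta_0,h)\cup S_{-i}(\theta_0,h)\bigr)\cap\D_i^+$. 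The first box is controlled directly by \eqref{SliceCE}. For the second, I would rewrite $S_{-i}(\theta_0,h)=S_i(-\theta_0,h)$ in $\C_i$-coordinates and exploit $\supp(\mu_i^+)\subseteq\D_i^+=\{x_1>0\}$: a short case split in $\theta_0\in[0,h]$, $(h,\pi-h)$, $[\pi-h,\pi]$ shows that the upper-half portion of $S_i(-\theta_0,h)$ is either empty or is contained in $S_i(\theta_0,h)\cup S_i(\pi,h)$, so \eqref{SliceCE} again yields a bound of $Ch$. Hence the integrand is at most $2Ch$ uniformly in $i$, and since each $\mu_i^+$ is a probability measure, $\nu(\S)=\tilde{\mu}(\D)\leq\mu(\D)<+\infty$, which gives $\tilde{\mu}(S(\theta_0,h))\leq 2C\mu(\D)h$.

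Combining the two estimates gives $\mu(S(\theta_0,h))\leq C(1+2\mu(\D))h$, which is the required Carleson bound. The main obstacle, as I see it, is not a hard estimate but the bookkeeping around the reflected box $S_{-i}(\theta_0,h)$: the hypothesis \eqref{SliceCE} is stated only for $\theta_0\in[0,\pi]$, so one must verify through the short case analysis that its upper-half portion is genuinely covered by a Carleson box whose angle is admissible; everything else is either the disintegration formula or a direct application of \eqref{SliceCE}.
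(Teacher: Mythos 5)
Your argument is correct and is essentially the paper's own proof: both apply the disintegration formula \eqref{BigIntID} to the indicator of $S(\theta_0,h)$, bound the real-line part and the slicewise parts by \eqref{SliceCE}, and conclude with a bound of the form $C(1+\nu(\S))h$ (your $\nu(\S)=\tilde{\mu}(\D)\leq\mu(\D)$). The only difference is that you spell out why $S(\theta_0,h)\cap\D_i^{+}$ reduces to slicewise Carleson boxes, which the paper takes for granted; your case analysis is sound (in fact the upper-half portion of the reflected box $S_i(-\theta_0,h)$ is already contained in $S_i(\theta_0,h)$ for $\theta_0\in[0,\pi]$, so the extra box $S_i(\pi,h)$ and the factor $2$ are not even needed).
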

\begin{proof}
If $(\mu_{\R}, \nu, (\mu_{i}^{+})_{i\in\S})$ is a slice Carleson decomposition of $\mu$, then we have for $\theta_0\in[0,\pi]$ and $h>0$  by \eqref{BigIntID} and \eqref{SliceCE} that
\begin{gather*}
 \mu\left(S\left(\theta_0,h\right)\right) = \mu_{\R}(S_{i_0}(\theta_0,h)) + \int_{\S} \mu_{i}^{+}\left(S_i(\theta_0,h)\right)\, d\nu(i) \\
 \leq C h + \int_{\S} Ch \,d\nu(i) =  C  (1 + \nu(\S)) h,
 \end{gather*}
where $i_0$ is an arbitrary imaginary unit in $\S$. Hence, $\mu$ is a Carleson measure.

\end{proof}

\begin{example}
Let $i_{n}\in\S$ for $n\in\N$ such that $i_m \neq i_n$ for $n\neq m$ and define 
\[\mu := \sum_{n=0}^{+\infty}n^{-\frac{3}{2}} \delta_{a_n}\quad\text{with $a_n=\frac{n-1}{n}i_n$},\]
 where $\delta_a$ denotes the point measure at a point $a$. Obviously $\mu$ is a finite Radon measure on $\D$ with $\mu(\D) = \sum_{n=0}^{+\infty}n^{-\frac{3}{2}} < +\infty$. A slice decomposition of $\mu$ is given by $\mu_{\R} = 0$ and $\nu = \sum_{n=0}^{+\infty}n^{-\frac{3}{2}}\delta_{i_n}$ and $\mu_{i}^{+}=\delta_{a_n}$ if $i = i_n$ for some $n\in\N$ and $\mu_{i}^{+} = \delta_{\frac{1}{2}i}$ otherwise. 

Each of the measures $\mu_i^+$ is a Carleson measure on $\D_i$: for $i\neq i_n, n\in\N$ one can choose $C = 2$ in \eqref{SliceCE} and for $i_n$ the Carleson bound is $C=n$. This however shows that it is not possible to find a global slicewise Carleson bound $C$ such that \eqref{SliceCE} holds for all constants $i$ in $\S$. Thus the considered slice decomposition is not Carleson. (Since the measures $\mu_{i_n}^+$, which create the problems, are uniquely defined because $\nu(i_n)\neq 0$, this holds even for any slice decomposition of~$\mu$.)

Finally, for $h = 1/n$, we have
\begin{gather*}
 \frac{\mu\left(S\left(\frac{\pi}{2}, \frac{1}{n}\right)\right)}{\frac{1}{n}} = \sum_{k=n}^{+\infty} k^{-\frac{3}{2}} n =  \sum_{\ell=1}^{+\infty}\sum_{k=0}^{n-1}(\ell n + k)^{-\frac{3}{2}} n\\ 
\geq \sum_{\ell=1}^{+\infty}\sum_{k=0}^{n-1}((\ell+1) n)^{-\frac{3}{2}} n =  n^{\frac{1}{2}}\sum_{\ell=2}^{+\infty}\ell^{-\frac{3}{2}}.
\end{gather*}
Since the last expression tends to infinity as $n$ tends to infinity, it is obviously not possible to find a bound such that \eqref{CB} holds true. Thus the measure $\mu$ is not a Carleson measure on $\D$ although all of the measures $\mu_i^+$ are Carleson measures on $\D_i^+$. We therefore find that the condition of uniformity in \eqref{SliceCE} cannot be relaxed without losing the validity of \Cref{SC->C}. 
\end{example}
\begin{example}\label{ExFMeas}
Let $\lambda_4$ denote the Lebesgue measure on $\H \cong  \R^4$, let $\rho\in L^1(\D, \lambda_4)$ with $\rho(s)> 0$ almost everywhere and let $\mu(s) = \rho(s) d\lambda_4(s)$ be the measure with density $\rho$ with respect to $\lambda_4$. If we write the vectorial part of the variable $s$ in spherical coordinates, we have for any function $f\in L_1(\D,\mu)$ that
\begin{gather*}
\int_{\D}f(s)\,d\mu(s) = \int_{\D}f(s)\rho(s)\,d\lambda_4(s) \\
= \int_{0}^{2\pi}\int_{0}^{\pi}\int_{\D_i^+} f(s_0 + i(\theta,\varphi)s_1)\rho(s_0+i(\theta,\varphi)s_1)s_1^2\sin(\theta)\,d\lambda_2(s_0,s_1)\,d\theta\,d\varphi,
\end{gather*}
where $i(\theta,\varphi) = \sin(\theta)\cos(\varphi) e_1 + \sin(\theta)\sin(\varphi) e_2 + \cos(\theta)e_3$ and $\lambda_2$ denotes the two-dimensional Lebesgue measure. Hence,
\begin{equation}
\int_{\D}f(s)\,d\mu(s) 
= \int_{\S}\left(\int_{\D_i^+} f(s_0 + i s_1)\rho(s_0+ i s_1 )s_1^2\,d\lambda_2(s_0,s_1)\right) d\sigma(i),
\end{equation}
where $\sigma$ denotes the surface measure on $\S$. After setting
\[ C(i) = \int_{\D_i} \rho(s_0 + is_1) s_1^2 \,d\lambda_2(s_0,s_1)\]
a slice decomposition of $\mu$ is therefore given by
\begin{align*}
\displaystyle\mu_\R &= 0 \\
\displaystyle d\nu(i) &= C(i)\,d\sigma(i)\\
\displaystyle d\mu_i^+(s)& = \frac{1}{C(i)}\rho(s)s_1^2\,d\lambda_2(s).
\end{align*}
\end{example}

\begin{definition}
We call a Carleson measure $\mu$ a vanishing Carleson measure if uniformly for $\theta_0\in[0,\pi]$  we have
\begin{equation}\label{VCB}
 \lim_{h\to 0}\frac{\mu(S(\theta_0,h))}{h} = 0 . 
 \end{equation}
\end{definition}
\begin{definition}
We say that a slice-decomposition $(\mu_\R, \nu, (\mu_i^+)_{i\in\S})$ of a measure $\mu$ is called a vanishing slice Carlson decomposition if the measures $\mu_\R$ and $\mu_i$ are vanishing Carleson measures, that is if  one has uniformly in $\theta_0$ and $i\in\S$ that
\[ \lim_{h\to0} \frac{\mu_{\R}(S_i(\theta_0,h))}{h} = 0\quad \text{and}\quad \lim_{h\to 0}\frac{\mu_i^+(S_i(\theta_0,h))}{h} = 0.\]

\end{definition}

\begin{corollary}\label{VSC->VC}
If a finite Radon measure $\mu$ on $\D$ has a vanishing slice Carleson decomposition, then it is a vanishing Carleson measure.
\end{corollary}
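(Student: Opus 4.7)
The plan is to mimic the proof of \Cref{SC->C} almost verbatim, replacing the uniform bound by a uniform ``smallness'' statement. The key identity remains
\[ \mu(S(\theta_0,h)) = \mu_{\R}(S_{i_0}(\theta_0,h)) + \int_{\S}\mu_i^+(S_i(\theta_0,h))\,d\nu(i), \]
obtained from \eqref{BigIntID} applied to the characteristic function of $S(\theta_0,h)$ (here $i_0$ is any fixed imaginary unit, which is legitimate because $\supp(\mu_\R)\subset\R$ so $\mu_{\R}(S_i(\theta_0,h))$ does not depend on $i$). Dividing by $h$ gives
\[ \frac{\mu(S(\theta_0,h))}{h} = \frac{\mu_{\R}(S_{i_0}(\theta_0,h))}{h} + \int_{\S}\frac{\mu_i^+(S_i(\theta_0,h))}{h}\,d\nu(i). \]

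Next I would fix $\varepsilon>0$ and invoke the uniformity assumption in the definition of vanishing slice Carleson decomposition: there exists $h_0>0$ such that for all $h<h_0$, all $\theta_0\in[0,\pi]$, and all $i\in\S$ we have $\mu_{\R}(S_i(\theta_0,h))/h<\varepsilon$ and $\mu_i^+(S_i(\theta_0,h))/h<\varepsilon$. Substituting into the identity yields
\[ \frac{\mu(S(\theta_0,h))}{h} \leq \varepsilon + \varepsilon\,\nu(\S) = (1+\nu(\S))\varepsilon \]
for every $h<h_0$ and every $\theta_0\in[0,\pi]$. Before concluding I would briefly note that $\nu(\S)<+\infty$: since $\mu$ is finite, $\tilde\mu(\D)<+\infty$, and because each $\mu_i^+$ is a probability measure the disintegration formula $\tilde\mu(\D)=\int_{\S}\mu_i^+(\D_i^+)\,d\nu(i)=\nu(\S)$ gives the finiteness. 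Thus $\limsup_{h\to 0}\mu(S(\theta_0,h))/h \leq (1+\nu(\S))\varepsilon$ uniformly in $\theta_0$, and since $\varepsilon$ was arbitrary the limit is zero.

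I do not anticipate a serious obstacle: the only subtlety is ensuring that the uniformity in $i$ (which is precisely what the definition provides) lets one bound the integrand pointwise by $\varepsilon$ before integrating, rather than needing dominated convergence, and that $\nu(\S)$ is finite so the resulting bound is meaningful. Both are handled cleanly above, so the argument runs parallel to \Cref{SC->C} with essentially no new ideas.
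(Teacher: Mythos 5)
Your argument is correct and follows the same route as the paper's proof: both rest on the decomposition identity $\mu(S(\theta_0,h)) = \mu_{\R}(S_{i_0}(\theta_0,h)) + \int_{\S}\mu_i^+(S_i(\theta_0,h))\,d\nu(i)$ and the uniformity of the vanishing condition in $i$ and $\theta_0$ to produce the bound $(1+\nu(\S))$ times a quantity going to zero; the paper merely packages the $\varepsilon$--$h_0$ step as a majorant function $\mathcal{E}(h)\to 0$. Your explicit remark that $\nu(\S)<+\infty$ follows from the finiteness of $\mu$ is a welcome clarification that the paper leaves implicit.
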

\begin{proof}
If $(\mu_{\R},\nu,(\mu_i)_{i\in\S})$ is a vanishing slice Carleson decomposition of $\mu$, then there exists a function $\mathcal{E}(h)$ such that $\mathcal{E}(h)\geq 0$ and $\mathcal{E}(h)\to 0$ as $h\to 0$ and such that $\mu_{\R}(S_i(\theta_0,h)) \leq \mathcal{E}(h)h$ and $\mu_{i}(S_i(\theta_0,h)) \leq \mathcal{E}(h)h$ for any $i\in\S$ and any $\theta_0\in[0,\pi]$. Thus, after choosing an arbitrary imaginary unit $i_0\in\S$, we have 
\begin{gather*}
\mu(S(\theta_0,h)) = \mu_{\R}(S_{i_0}(\theta_0,h)) + \int_{\S}\mu_i^+(S_i(\theta_0,h))\,d\nu(i) \\
 \leq \mathcal{E}(h)h + \int_{\S}\mathcal{E}(h)h\,d\nu(i)  \leq (1+ \nu(\S))\mathcal{E}(h)h
\end{gather*}
for any $\theta_0\in[0,\pi]$. Hence \eqref{VCB} holds true uniformly in $\theta_0$ and $\mu$ is therefore a vanishing Carleson measure.

\end{proof}

\section{The space $\BMOSH(\D)$}\label{BMOSect}
In the complex setting, the space $\BMOA(\D_{\C})$ of holomorphic functions of bounded mean oscillation on the complex unit disc $\D_{\C}$ consists of all functions in the complex Hardy space  $\hardy^2(\D_{\C})$ such that
\[ \| f \|_{\BMOA(\D_{\C})} := |f(0)| + \sup_{I \subset \R, |I|\leq 2\pi} \sqrt{ \frac{1}{|I|} \int_{I}\left| f\left(e^{\imath\theta}\right) - f_{I}\right|^2 \,d\theta }< +\infty,\]
where $I=(\alpha,\beta)\subset \R$ denotes an interval,  $|I| = \beta - \alpha$ is length of the interval and $f_I := \frac{1}{|I|}\int_{I} f\left(e^{\imath\theta}\right) \,d\theta$. This motivates the following definition.
\begin{definition}
Let $f$ be a function in $\hardy^2(\D)$. For any interval $I =(\alpha,\beta)\subset\R$ with $|I |:= \beta-\alpha \leq 2\pi$ and $i\in\S$, we denote by $f_{I ,i}$ the average value (of the radial limit) of $f$ on the arc $\left(e^{\alpha i},e^{\beta i}\right) \subset \partial\D_i$, i.e.
\[ f_{I ,i} = \frac{1}{|I |}\int_{I }f\left(e^{i\theta}\right)\,d\theta.\]
We say that the function $f$ belongs to $\BMOSH(\D_i)$ for some $i\in\S$ if 
\[ |f|_{\BMOSH(\D_i)}^2 := \sup_{I \subset\R, |I |\leq 2\pi} \frac{1}{|I |}\int_{I }\left|f\left(e^{i\theta}\right) - f_{I ,i}\right|^2\,d\theta<+\infty\]
and to $\BMOSH(\D)$ if 
\[ | f|_{\BMOSH(\D)} := \sup_{i\in\S}|f|_{\BMOSH(\D_i)}<+\infty.\]
\end{definition}

Arcozzi and Sarfatti studied first properties of these spaces in \cite{Arcozzi} in order to investigate Hankel operators in a quaternionic setting. In particular they showed the following lemma, which is a direct consequence of the representation formula and as a consequence of which we will only consider the space $\BMOSH(\D)$ in the following.
\begin{lemma}\label{BMOSplit}
Let $i\in\S$. Then $f\in\BMOSH(\D_i)$ if and only if $f\in\BMOSH(\D)$ and
\[ |f|_{\BMOSH(\D_i)} \leq |f|_{\BMOSH(\D)} \leq 2 |f|_{\BMOSH(\D_i)}.\]
\end{lemma}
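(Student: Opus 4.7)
\smallskip

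The first inequality $|f|_{\BMOSH(\D_i)}\le|f|_{\BMOSH(\D)}$ is immediate from the definition of $|f|_{\BMOSH(\D)}$ as the supremum over $j\in\S$. For the non-trivial bound, the plan is to fix an arbitrary $j\in\S$ and show that $|f|_{\BMOSH(\D_j)}\le 2|f|_{\BMOSH(\D_i)}$, using the Representation Formula (Theorem~\ref{RepFo}) to transfer estimates from $\C_i$ to $\C_j$.

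Concretely, I apply \Cref{RepFo} to $x=e^{j\theta}\in\partial\D_j$. Checking the two cases $\sin\theta\ge 0$ (where $i_x=j$ and $x_i=e^{i\theta}$) and $\sin\theta<0$ (where $i_x=-j$ and $x_i=e^{-i\theta}$) one finds that in both cases
\[
f(e^{j\theta})=\tfrac{1}{2}(1-ji)f(e^{i\theta})+\tfrac{1}{2}(1+ji)f(e^{-i\theta}).
\]
For any interval $I=(\alpha,\beta)\subset\R$ with $|I|\le 2\pi$, averaging this identity and using the substitution $\theta\mapsto -\theta$ in the second integral (which sends $I$ to the reflected interval $-I:=(-\beta,-\alpha)$ of the same length) gives
\[
f_{I,j}=\tfrac{1}{2}(1-ji)f_{I,i}+\tfrac{1}{2}(1+ji)f_{-I,i}.
\]
Subtracting the two displays and using the bounds $|1\pm ji|\le 2$ together with $(a+b)^2\le 2(a^2+b^2)$ yields
\[
|f(e^{j\theta})-f_{I,j}|^2\le 2\bigl(|f(e^{i\theta})-f_{I,i}|^2+|f(e^{-i\theta})-f_{-I,i}|^2\bigr).
\]

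Now I integrate over $I$ and divide by $|I|$. The first term on the right is directly bounded by $2|f|_{\BMOSH(\D_i)}^2$. For the second term the substitution $\theta\mapsto-\theta$ turns $\frac{1}{|I|}\int_I|f(e^{-i\theta})-f_{-I,i}|^2\,d\theta$ into $\frac{1}{|-I|}\int_{-I}|f(e^{i\theta})-f_{-I,i}|^2\,d\theta$, which is again at most $|f|_{\BMOSH(\D_i)}^2$ because $|f|_{\BMOSH(\D_i)}$ is defined as a supremum over all intervals of length at most $2\pi$. Altogether
\[
\frac{1}{|I|}\int_I|f(e^{j\theta})-f_{I,j}|^2\,d\theta\le 4\,|f|_{\BMOSH(\D_i)}^2,
\]
and taking the supremum first over $I$ and then over $j\in\S$ gives the desired inequality $|f|_{\BMOSH(\D)}\le 2|f|_{\BMOSH(\D_i)}$.

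The only real subtlety is bookkeeping: one must check that the representation formula takes the \emph{same} algebraic form for $\sin\theta<0$ (where the chosen imaginary unit $i_x$ flips sign but $x_i$ and $\overline{x_i}$ swap, producing the same expression), and one must recognize that integrating the term $f(e^{-i\theta})$ over $I$ is equivalent (via reflection) to integrating $f(e^{i\theta})$ over an interval of the same length, so the $\BMOSH(\D_i)$-bound still applies. Everything else is the triangle inequality.
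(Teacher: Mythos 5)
Your argument is correct and is exactly the one the paper has in mind: the paper states this lemma without proof, citing \cite{Arcozzi} and noting it is ``a direct consequence of the representation formula,'' and your proof simply fills in those details (the case check on $\operatorname{sgn}(\sin\theta)$, the identity for $f_{I,j}$, the bound $|1\pm ji|\le 2$, and the reflection $I\mapsto -I$). No gaps.
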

They also showed in the proof of \cite[Theorem~5.3]{Arcozzi} the following consequence of the splitting lemma.
\begin{lemma}\label{BMOComponents}
Let $f\in\lhol(\D)$, let $i,j\in\S$ with $i\perp j$ and write $f_i = f_1+f_2j$ with holomorphic functions $f_\ell: \D_i\to\C_i$ according to \Cref{SplitLem}.  Then $f\in\BMOSH(\D)$ if and only if $f_1,f_2\in\BMOA(\D_i)$.
\end{lemma}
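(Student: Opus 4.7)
The plan is to reduce the statement to a direct $L^2$ orthogonality calculation on the boundary arc of $\D_i$. First, by Lemma~\ref{BMOSplit} I would replace the global condition $f\in\BMOSH(\D)$ by the slicewise one $f\in\BMOSH(\D_i)$, which only involves integrals on arcs of $\partial\D_i$. On this slice, the Splitting Lemma gives the pointwise identity $f_i(z) = f_1(z) + f_2(z) j$ for $z\in\D_i$. Since $\BMOSH(\D)\subset\hardy^2(\D)$ and $\BMOA(\D_i)\subset\hardy^2_{\C}(\D_i)$ by definition, Lemma~\ref{HardySplit} together with the existence of radial limits for functions in the complex and quaternionic Hardy spaces allows me to extend this identity to radial boundary values almost everywhere, so that $f(e^{i\theta}) = f_1(e^{i\theta}) + f_2(e^{i\theta})j$ a.e.\ on $\partial\D_i$.

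The algebraic core is the Pythagorean-type identity $|a+bj|^2 = |a|^2 + |b|^2$ for $a,b\in\C_i$, which follows from $\{1,i,j,ij\}$ being an $\R$-orthonormal basis of $\H$. Applying this to the decomposition
\[ f(e^{i\theta}) - f_{I,i} = \bigl(f_1(e^{i\theta}) - (f_1)_I\bigr) + \bigl(f_2(e^{i\theta}) - (f_2)_I\bigr)\, j, \]
where $(f_\ell)_I := \frac{1}{|I|}\int_I f_\ell(e^{i\theta})\,d\theta$ and the right-linearity of the integral gives $f_{I,i} = (f_1)_I + (f_2)_I j$, yields
\[ \bigl| f(e^{i\theta}) - f_{I,i} \bigr|^2 = \bigl| f_1(e^{i\theta}) - (f_1)_I \bigr|^2 + \bigl| f_2(e^{i\theta}) - (f_2)_I \bigr|^2. \]
Integrating over $I$, dividing by $|I|$, and taking the supremum over intervals $I\subset\R$ with $|I|\leq 2\pi$, I obtain the two-sided estimate
\[ \max\bigl( |f_1|_{\BMOA(\D_i)}^2,\, |f_2|_{\BMOA(\D_i)}^2 \bigr) \leq |f|_{\BMOSH(\D_i)}^2 \leq |f_1|_{\BMOA(\D_i)}^2 + |f_2|_{\BMOA(\D_i)}^2. \]
Combined with Lemma~\ref{HardySplit}, which handles the $\hardy^2$-membership part of both definitions, this directly yields the equivalence $f\in\BMOSH(\D)\Longleftrightarrow f_1,f_2\in\BMOA(\D_i)$.

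The main point I expect to need care with is justifying that the interior splitting passes to the boundary in the $L^2$-sense needed in order to interpret $f_{I,i}$ as an average of radial limits of both $f_i$ and its components simultaneously. This is handled by noting that $\hardy^2$-membership transfers between $f$ and $(f_1,f_2)$ via Lemma~\ref{HardySplit}, so the three functions simultaneously admit radial boundary limits almost everywhere on $\partial\D_i$, and the interior identity forces the boundary relation a.e.\ by passing to the limit along radii. After this bookkeeping, the argument reduces to the one-line application of the orthogonality identity indicated above.
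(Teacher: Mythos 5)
Your proposal is correct and follows essentially the same route the paper takes: the paper attributes the lemma to Arcozzi--Sarfatti but effectively reproves it inside the proof of \Cref{BMONorms}, using exactly your Pythagorean identity $|a_1+a_2j|^2=|a_1|^2+|a_2|^2$ (equation \eqref{aai}) and the resulting decomposition \eqref{AGH} of the mean-oscillation integral to obtain the two-sided estimate \eqref{BMOiSplitEst}, with \Cref{HardySplit} handling the $\hardy^2$-membership. Your extra care about passing the splitting identity to radial boundary values is a legitimate refinement of a point the paper leaves implicit, but it does not change the argument.
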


\begin{lemma}\label{BMONorms}
For any $i\in\S$, the space $\BMOSH(\D)$ equipped with the norm
\[ \|f\|_{\BMOSH(\D_i)}  = |f(0)|+ |f|_{\BMOSH(\D_i)}\]
is a quaternionic right Banach space. Similarly, the space $\BMOSH(\D)$ is a quaternionic right Banach space, when it is equipped with the norm
\[ \|f \|_{\BMOSH(\D)} = |f(0)| + |f|_{\BMOSH(\D)}.\]
Moreover, these norms are equivalent.
\end{lemma}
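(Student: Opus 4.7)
The plan is to obtain the norm axioms directly, deduce the equivalence of the two norms from \Cref{BMOSplit}, and reduce completeness to the classical complex case via the Splitting Lemma and the Extension Lemma.

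The workhorse will be a quantitative strengthening of \Cref{BMOComponents}. Writing $f_i = f_1 + f_2 j$ with $\C_i$-valued holomorphic components and using that $\{1,j\}$ is an orthonormal basis of $\H$ over $\C_i$, the boundary average splits as $f_{I,i} = f_{1,I,i} + f_{2,I,i}\, j$, and a pointwise expansion gives
\[ \left|f\left(e^{i\theta}\right) - f_{I,i}\right|^2 = \left|f_1\left(e^{i\theta}\right) - f_{1,I,i}\right|^2 + \left|f_2\left(e^{i\theta}\right) - f_{2,I,i}\right|^2.\]
Integrating over $I$, dividing by $|I|$, and taking $\sup_{I}$ yields the two-sided estimate
\[ \max\bigl\{|f_1|_{\BMOA(\D_i)}^2, |f_2|_{\BMOA(\D_i)}^2\bigr\} \leq |f|_{\BMOSH(\D_i)}^2 \leq |f_1|_{\BMOA(\D_i)}^2 + |f_2|_{\BMOA(\D_i)}^2.\]

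With this in hand, the norm axioms for $\|\cdot\|_{\BMOSH(\D_i)}$ are routine: right-homogeneity follows from $(fq)_{I,i} = f_{I,i}\, q$ for $q\in\H$, the triangle inequality from that of $|\cdot|$ on $\H$, and if $\|f\|_{\BMOSH(\D_i)} = 0$ then the lower bound above forces $f_1$ and $f_2$ to have zero $\BMOA$-seminorm, hence to be constant; consequently $f_i$ is constant on $\D_i$, and the Identity Principle \Cref{IDPrinciple} (applied to $f$ minus this constant) promotes this to $f$ being constant on $\D$, which combined with $|f(0)|=0$ gives $f\equiv 0$. The same argument works verbatim for $\|\cdot\|_{\BMOSH(\D)}$. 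Equivalence of the two norms is then immediate from \Cref{BMOSplit}: adding $|f(0)|$ to each side of $|f|_{\BMOSH(\D_i)} \leq |f|_{\BMOSH(\D)} \leq 2|f|_{\BMOSH(\D_i)}$ produces $\|f\|_{\BMOSH(\D_i)} \leq \|f\|_{\BMOSH(\D)} \leq 2\|f\|_{\BMOSH(\D_i)}$.

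For completeness it suffices, by the equivalence, to treat $\|\cdot\|_{\BMOSH(\D_i)}$ for a fixed $i\in\S$. Let $(f_n)$ be Cauchy, pick $j\in\S$ with $i\perp j$, and split $f_{n,i} = f_{n,1} + f_{n,2}\, j$ by \Cref{SplitLem}. The lower bound in the displayed estimate shows that $(f_{n,1})$ and $(f_{n,2})$ are Cauchy in $\BMOA(\D_i)$, and since $f_n(0) = f_{n,1}(0) + f_{n,2}(0)\, j$ the components also form Cauchy sequences in the complex $\BMOA$-norm; by completeness of the classical space they have limits $g_1, g_2 \in \BMOA(\D_i)$. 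The function $g := g_1 + g_2 \, j$ is $\H$-valued holomorphic on $\D_i$ in the sense of \eqref{SHOr}, so the Extension Lemma \Cref{ExtLem} produces a slice hyperholomorphic extension $f := \ext(g)$ on $\D$. By \Cref{BMOComponents}, $f \in \BMOSH(\D)$, and the upper bound applied to $f_n - f$ together with $f_n(0) \to g_1(0) + g_2(0)\, j = f(0)$ gives $\|f_n - f\|_{\BMOSH(\D_i)}\to 0$. The main obstacle is thus the quantitative refinement of \Cref{BMOComponents} rather than anything in the completeness argument itself; once the two-sided estimate is in place, the proof is a direct transfer from the classical complex theory.
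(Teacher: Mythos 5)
Your proposal is correct and follows essentially the same route as the paper: split $f_i=f_1+f_2j$, derive the two-sided comparison between $\|f\|_{\BMOSH(\D_i)}$ and the component $\BMOA(\D_i)$-norms from $|a_1+a_2j|^2=|a_1|^2+|a_2|^2$, get equivalence of the two norms from \Cref{BMOSplit}, and reduce completeness to that of the classical $\BMOA(\D_i)$ via the Extension Lemma. The only differences are cosmetic (you phrase the estimate with a maximum of squared seminorms where the paper uses $\sqrt{a+b}\le\sqrt a+\sqrt b$, and you spell out the definiteness of the norm, which the paper calls immediate).
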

\begin{proof}
It is immediate that $\|\cdot\|_{\BMOSH(\D_i)}$ and $\|\cdot\|_{\BMOSH(\D)}$ define norms on $\BMOSH(\D)$. Moreover, these norms are equivalent because of \Cref{BMOSplit}.

Let us now discuss the completeness of $\BMOSH(\D)$ with respect to these norms: let $i,j\in\S$ with $i\perp j$ and write $f_i = f_1 + f_2 j$ for $f\in\BMOSH(\D)$ according to \Cref{SplitLem}. By \Cref{BMOSplit}, $f\in\BMOSH(\D)$ if and only if  $f_1,f_2\in \BMOA(\D_i)$.

We establish a relation between the norm $\|f\|_{\BMOSH(\D_i)}$ of $f$ and the norms $\|f_1\|_{\BMOA(\D_i)}$ and $\|f_2\|_{\BMOA(\D_i)}$ of $f_1$ resp. $f_2$. First observe that $a \in\H$ can be written as $a = a_1 + a_2j$ with $a_1,a_2\in\C_i$ and that in this case 
\begin{equation}\label{aai}
|a|^2 = |a_1|^2 + |a_2|^2
\end{equation}
 and in turn for any interval $I\subset \R$ with $|I|\leq 2\pi$
\begin{equation}\label{AGH}\begin{split}
& \frac{1}{|I|} \int_{I} |f(e^{i\theta}) - f_{I,i}|^2\,d\theta \\
=&\frac{1}{|I|} \int_{I} |f_1(e^{i\theta}) - f_{I,1}|^2\,d\theta +  \frac{1}{|I|} \int_{I} |f_2(e^{i\theta}) - f_{I,2}|^2\,d\theta,
\end{split}\end{equation}
where we set
\[f_{I,\ell} = \frac{1}{|I |}\int_{I }f_{\ell}\left(e^{i\theta}\right)\,d\theta\qquad\text{for $\ell\in\{1,2\}.$}\]
 Since for $a,b >0$ the inequality \(\sqrt{a+b} \leq \sqrt{a} + \sqrt{b}\) holds true, we deduce from \eqref{aai} first that $|a| \leq |a_1| + |a_2|$ for $a = a_1 + a_2j$. From this estimate and~\eqref{AGH} we conclude that 
 \begin{align*}
 \|f\|_{\BMOSH(\D_i)} =& |f(0)| + \sup_{I\subset\R, |I|\leq 2\pi } \sqrt{\frac{1}{|I|}\int_{I}\left|f\left(e^{i\theta}\right) - f_{I,i}\right|^2\,d\theta}\\
 \leq & |f_1(0)| + \sup_{I\subset\R, |I|\leq 2\pi } \sqrt{\frac{1}{|I|}\int_{I}\left|f_1\left(e^{i\theta}\right) - f_{I,1}\right|^2\,d\theta}\\
& + |f_2(0)| + \sup_{I\subset\R, |I|\leq 2\pi } \sqrt{\frac{1}{|I|}\int_{I}\left|f_2\left(e^{i\theta}\right) - f_{I,2}\right|^2\,d\theta}\\
= & \|f_1\|_{\BMOA(\D_i)} + \|f_2\|_{\BMOA(\D_i)}.
 \end{align*}
On the other hand \eqref{aai} implies for $\ell\in\{1,2\}$ that $|a| = |a_1 + a_2j | \geq |a_\ell|$ and hence
 \begin{align*}
 \|f_\ell\|_{\BMOA(\D_i)} = & |f_\ell(0)| + \sup_{I\subset\R, |I|\leq 2\pi } \sqrt{\frac{1}{|I|}\int_{I}\left|f_1\left(e^{i\theta}\right) - f_{I,1}\right|^2\,d\theta}\\
& \leq |f(0)| + \sup_{I\subset\R, |I|\leq 2\pi } \sqrt{\frac{1}{|I|}\int_{I}\left|f\left(e^{i\theta}\right) - f_{I,i}\right|^2\,d\theta}\\
= & \|f\|_{\BMOSH(\D_i)}.
 \end{align*}
Altogether we have
\begin{equation}\label{BMOiSplitEst}
\|f_{\ell}\|_{\BMOA(\D_i)}\leq \|f\|_{\BMOSH(\D_i)} \leq \|f_1\|_{\BMOA(\D_i)} + \| f_2\|_{\BMOA(\D_i)}.
\end{equation}
By the equivalence of the norms, we also obtain
\begin{equation}\label{BMOSplitEst}
\|f_{\ell}\|_{\BMOA(\D_i)}\lesssim \|f\|_{\BMOSH(\D)} \lesssim \|f_1\|_{\BMOA(\D_i)} + \| f_2\|_{\BMOA(\D_i)}.
\end{equation}
The completeness of $\BMOSH(\D)$ follows now from the completeness of the complex space $\BMOA(\D_i)$, cf. \cite[Theorem~5.1]{Girela}. If $(f_n)_{n\in\N}$ is a Cauchy sequence in $\BMOSH(\D)$ w.r.t. $ \| \cdot \|_{\BMOSH(\D_i)}$, then $(f_{n,1})_{n\in\N}$ and $(f_{n,2})_{n\in\N}$ are Cauchy sequences in $\BMOA(\D_i)$ because of \eqref{BMOiSplitEst}. Hence, they converge to two functions $f_1$ and $f_2$ in $\BMOA(\D_i)$. By \Cref{BMOSplit}, we have $f:= \ext(f_1 + f_2j) \in \BMOSH(\D)$ and $f_n \to f$ w.r.t. $\|\cdot \|_{\BMOSH(\D_i)}$ because of \eqref{BMOiSplitEst}. Thus $\BMOSH(\D_i)$ is complete w.r.t. $\|\cdot \|_{\BMOSH(\D_i)}$ and because of the equivalence of the norms also w.r.t. $\| \cdot\|_{\BMOSH(\D)}$.

\end{proof}

\begin{remark}
The space $\BMOSH(\D)$ is not separable, which follows easily from the fact that the classical space $\BMOA(\D_i)$ is not separable. Indeed, $\BMOA(\D_i)$ can be embedded into $\BMOSH(\D)$ by mapping $f\in\BMOA(\D_i)$ to its slice hyperholomorphic extension $\ext(f)\in\BMOSH(\D)$. In this case,
 $\| f\|_{\BMOA(\D_i)} = \| \ext(f)\|_{\BMOSH(\D_i)}$. The functions
 \[ f_{\alpha,i}(z) = \log\left(\frac{1}{1 - e^{i\alpha}z}\right) \qquad \alpha \in[0,2\pi),\] 
 constitute an uncountable family of functions in $\BMOA(\D_i)$ such that the distances $\| f_{\alpha,i} - f_{\beta,i}\|_{\BMOA(\D_i)}$ are bounded by a positive constant from below, cf. \cite[Corollary~5.4]{Girela}. Hence, the family $(f_\alpha)_{\alpha\in [0,2\pi)}$ defined by
 \begin{align*}
    &f_{\alpha}(s)= \ext(f_{\alpha,i})(s) \\
    =& \frac{1}{2}(1-i_si)   \log\left(\left(1 - e^{i\alpha}s_i\right)^{-1}\right)   +  \frac{1}{2}(1+  i_si)   \log\left(\left(1 - e^{i\alpha}\overline{s_i}\right)^{-1}\right). 
    \end{align*}
with $s_i = s_0 + i s_1$ for $s = s_0+i_ss_1$ is an uncountable family of functions in $\BMOSH(\D)$ with the same property. 
\end{remark}

As in the classical setting, the space $\BMOSH(\D)$ is invariant under (appropriate) M\"obius transformations.
\begin{definition}For $a \in \D_i$, let $T_a$  denote the slice hyperholomorphic M\"obius transformation
\begin{equation}
T_a(s) = \left(1-sa_0 + s^2 |a|^2\right)^{-1}\left(s^2a - s\left(1+|a|^2\right)-\overline{a}\right).
\end{equation}
\end{definition}
\begin{remark}
The slice hyperholomorphic M\"obius transformation $T_a$ is obtained by extending the complex M\"obius transform \(T_{a,\C}(z):=(z+a)/(1+\overline{a}z)\)
 via the extension lemma to all of $\D$. In terms of the slice hyperholomorphic product $\ast$, which preserves slice hyperholomorphicity, it can be expressed as $T_a(s) = (1-s\overline{a})^{-\ast}\ast(q+\overline{a})$. For further details, we refer to \cite{SRMoeb}, where slice hyperholomorphic M\"obius transformations were introduced.
\end{remark}
\begin{definition}
For $f,g\in\lhol(\D)$ with $g(\D_i)\subset \D_i$, we define the $i$-composition of $f$ and $g$ as
\[ f\circ_i g := \ext(f_i\circ g_i).\]
\end{definition}
\begin{remark}
If $a\in\D_i$, then obviously
\[ f\circ_i T_a = \ext(f_i\circ T_{a,\C}).\]
The definition of the $i$-composition is necessary because the usual composition does not preserve slice hyperholomorphicity. If however $g$ is intrinsic, in particular if $g=T_a$ with $a\in\R$, then the usual composition preserves slice hyperholomorphicity and it agrees with the $i$-composition for any $i\in\S$.
\end{remark}

\begin{proposition}
Let $i\in \S$ and $a\in \D_i$ and consider the slice hyperholomorphic M\"obius transformation $T_{a}$. If  $ f\in \BMOSH(\D)$  then $f\circ_i  T_{a}\in \BMOSH(\D)$.
\end{proposition}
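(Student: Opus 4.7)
The plan is to reduce the statement, via the Splitting Lemma, to the classical Möbius invariance of $\BMOA(\D_i)$. First I would fix $j\in\S$ with $i\perp j$ and use \Cref{SplitLem} to write $f_i = f_1 + f_2 j$ with holomorphic components $f_1,f_2\colon\D_i\to\C_i$. By \Cref{BMOComponents}, the hypothesis $f\in\BMOSH(\D)$ is equivalent to $f_1,f_2\in\BMOA(\D_i)$.

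Next, observe that the complex Möbius transformation $T_{a,\C}(z)=(z+a)(1+\overline a z)^{-1}$ maps $\D_i$ biholomorphically onto $\D_i$, so the compositions $f_1\circ T_{a,\C}$ and $f_2\circ T_{a,\C}$ are well-defined holomorphic functions $\D_i\to\C_i$. The classical fact that $\BMOA$ is invariant under conformal automorphisms of the disc (see e.g.\ \cite[Theorem~5.1]{Girela} or Garnett's book) then yields $f_1\circ T_{a,\C},\ f_2\circ T_{a,\C}\in\BMOA(\D_i)$.

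Now I would identify these as the splitting components of $f\circ_i T_a$. By the definition of the $i$-composition,
\[
(f\circ_i T_a)_i = f_i\circ T_{a,\C} = f_1\circ T_{a,\C} + (f_2\circ T_{a,\C})\,j,
\]
where the second equality uses that $T_{a,\C}(z)\in\C_i$ whenever $z\in\D_i$, so that the $\C_i$-valued splitting of $f_i$ is preserved under the composition. Applying \Cref{BMOComponents} in the reverse direction to $f\circ_i T_a$ with components $f_1\circ T_{a,\C}$ and $f_2\circ T_{a,\C}$ gives $f\circ_i T_a\in\BMOSH(\D)$.

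The only delicate point is the step invoking classical Möbius invariance of $\BMOA$; since this is a well-known fact in one complex variable, the rest is a routine transfer via the Splitting Lemma. One could optionally make the argument quantitative by recording an estimate of the form $\|f\circ_i T_a\|_{\BMOSH(\D_i)}\lesssim C(a)\|f\|_{\BMOSH(\D_i)}$ coming from the corresponding complex bound together with \eqref{BMOiSplitEst}, but this is not required by the statement.
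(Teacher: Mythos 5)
Your proposal is correct and follows essentially the same route as the paper's own proof: split $f_i = f_1 + f_2 j$, note via the component characterization that $f_1,f_2\in\BMOA(\D_i)$, invoke the classical M\"obius invariance of $\BMOA$, and identify $f_1\circ T_{a,\C}$ and $f_2\circ T_{a,\C}$ as the splitting components of $f\circ_i T_a$. The only difference is cosmetic: the paper cites Danikas for the classical invariance and states the identity $(f\circ_i T_a)_i = f_1\circ T_a + (f_2\circ T_a)j$ without the (correct) justification you supply that $T_{a,\C}$ preserves $\D_i$.
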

\begin{proof}
Let $j\in\S$ with $i\perp j$ and apply \Cref{SplitLem} in order to write  $f_i =f_1+f_2 j$ with holomorphic functions $f_1,f_2: \D_i\to\C_i$.  Because of \Cref{BMOComponents}, the functions $f_1$ and $f_2$ belong to $\BMOA(\D_i)$ and the classical theory implies that $f_1\circ T_a$ and $f_2\circ T_a$ belong to $\BMOA(\D_i)$ too, cf. \cite[p. 22]{danikas}. Finally, as 
\[(f\circ_i T_{a})_i =  f_1\circ T_{a} +  f_2\circ T_{a}  j\]
the desired result follows again from \Cref{BMOComponents}.

\end{proof}

As in the complex case, it is possible to define a semi-norm on $\BMOSH(\D)$ that is invariant under certain M\"obius transformations.

\begin{lemma}
For $i\in\S$, the function given by
\[ |f|_{*,i}^2 = \sup_{a\in\D_i}\frac{1}{2\pi} \int_{-\pi}^{\pi}\left|f\circ_i T_a\left(e^{i\theta} \right) - f(a)\right|^2\,d\theta \]
is a semi-norm on $\BMOSH(\D)$ that satisfies 
\begin{equation}\label{MoebInv}
| f\circ_i T_a |_{*,i} = | f |_{*,i}  \qquad\text{for }a\in\D_i. 
\end{equation}
Moreover, the norm
\[  \| f\|_{\ast,i} := |f(0)| + |f|_{\ast,i}\]
is equivalent to $\|\cdot\|_{\BMOSH(\D)}$.
\end{lemma}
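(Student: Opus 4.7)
The plan is to reduce everything to the corresponding classical statement for the Garsia semi-norm
\[ |g|_{*,\C}^2 := \sup_{a\in\D_i}\frac{1}{2\pi}\int_{-\pi}^{\pi}\left|g\circ T_{a,\C}(e^{\imath\theta}) - g(a)\right|^2\,d\theta \]
on $\BMOA(\D_i)$. Classically $|\cdot|_{*,\C}$ is a semi-norm, it is invariant under the disk automorphisms $T_{a,\C}$, and $|g(0)| + |g|_{*,\C}$ is a norm equivalent to $\|g\|_{\BMOA(\D_i)}$; cf.\ \cite{Girela}.

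Fix $j\in\S$ with $i\perp j$ and write $f_i = f_1 + f_2 j$ via \Cref{SplitLem}; by \Cref{BMOComponents} we have $f_1,f_2\in\BMOA(\D_i)$. Since the complex M\"obius transform $T_{a,\C}$ preserves $\C_i$, the $i$-composition satisfies $(f\circ_i T_a)_i = f_i\circ T_{a,\C} = f_1\circ T_{a,\C} + (f_2\circ T_{a,\C})j$ on $\D_i$, so \eqref{aai} yields pointwise on $\partial\D_i$
\[ \left|(f\circ_i T_a)(e^{i\theta}) - f(a)\right|^2 = \sum_{\ell=1}^{2}\left|f_\ell\circ T_{a,\C}(e^{\imath\theta}) - f_\ell(a)\right|^2. \]
Integrating and taking the supremum over $a\in\D_i$ delivers the two-sided bound
\[ \max\bigl\{|f_1|_{*,\C}^2,\, |f_2|_{*,\C}^2\bigr\} \;\leq\; |f|_{*,i}^2 \;\leq\; |f_1|_{*,\C}^2 + |f_2|_{*,\C}^2. \]

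From this bound the three claims fall out with no further work. Non-negativity and absolute homogeneity of $|\cdot|_{*,i}$ are clear; the triangle inequality is immediate from Minkowski's inequality in $L^2$ applied to the $\H$-valued integrand (equivalently, applied to each complex component). The M\"obius invariance $|f\circ_i T_a|_{*,i} = |f|_{*,i}$ reduces, via the identity $(f\circ_i T_a)_\ell = f_\ell\circ T_{a,\C}$ together with the two-sided bound applied to both $f$ and $f\circ_i T_a$, to the classical invariance $|f_\ell\circ T_{a,\C}|_{*,\C} = |f_\ell|_{*,\C}$ for $\ell\in\{1,2\}$. Finally, the equivalence of $\|\cdot\|_{*,i}$ with $\|\cdot\|_{\BMOSH(\D)}$ is obtained by chaining the two-sided bound with the classical equivalence $|f_\ell(0)| + |f_\ell|_{*,\C} \lesssim \|f_\ell\|_{\BMOA(\D_i)} \lesssim |f_\ell(0)| + |f_\ell|_{*,\C}$ and with estimate \eqref{BMOiSplitEst} from \Cref{BMONorms}, which together yield $\|f\|_{*,i} \lesssim \|f_1\|_{\BMOA(\D_i)} + \|f_2\|_{\BMOA(\D_i)} \lesssim \|f\|_{\BMOSH(\D)}$ and the matching lower bound.

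The only genuine step is the identity $(f\circ_i T_a)_i = f_i\circ T_{a,\C}$ on $\C_i$, which is built into the very definition of the $i$-composition; once this is in place the whole statement is read off from well-known facts about the complex Garsia semi-norm. The sole point worth checking separately is that the boundary integrand is well-defined, which is automatic from $\BMOSH(\D)\subset\hardy^2(\D)$ and \Cref{HardySplit}.
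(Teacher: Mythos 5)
Your proposal takes the same route as the paper: split $f_i=f_1+f_2j$ via the Splitting Lemma, use $|q|^2=|q_1|^2+|q_2|^2$ to relate $|f|_{*,i}$ to the classical Garsia semi-norms $|f_\ell|_{*,\C}$ of the components, and then invoke the complex results of \cite{Girela} together with \eqref{BMOiSplitEst}. Your two-sided bound is precisely what the paper states (the upper bound explicitly, the lower bound implicitly in the final $\lesssim\|f\|_{\ast,i}$ of its chain of inequalities), and the norm-equivalence argument is identical.

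The one step that does not work as written is your derivation of the exact identity \eqref{MoebInv} ``via the two-sided bound applied to both $f$ and $f\circ_i T_a$''. Sandwiching both $|f\circ_i T_a|_{*,i}^2$ and $|f|_{*,i}^2$ between $\max_\ell|f_\ell|_{*,\C}^2$ and $|f_1|_{*,\C}^2+|f_2|_{*,\C}^2$ only shows the two quantities are comparable; a two-sided estimate with distinct upper and lower bounds can never produce an equality. To get the equality, keep the exact componentwise identity \emph{before} taking the supremum: for fixed $b\in\D_i$,
\begin{gather*}
\frac{1}{2\pi}\int_{-\pi}^{\pi}\left|(f\circ_i T_a)\circ_i T_b\left(e^{i\theta}\right)-(f\circ_i T_a)(b)\right|^2 d\theta\\
=\sum_{\ell=1}^{2}\frac{1}{2\pi}\int_{-\pi}^{\pi}\left|f_\ell\circ T_{a,\C}\circ T_{b,\C}\left(e^{i\theta}\right)-f_\ell\left(T_{a,\C}(b)\right)\right|^2 d\theta,
\end{gather*}
and since $T_{a,\C}\circ T_{b,\C}=T_{c,\C}\circ\rho$ with $c=T_{a,\C}(b)$ and $\rho$ a rotation of $\D_i$, a change of variable in $\theta$ shows that each summand equals the corresponding integral for $f_\ell$ at the point $c$. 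Hence the quaternionic integral for $f\circ_i T_a$ at $b$ equals the defining integral of $|f|_{*,i}^2$ at $c$, and taking the supremum over $b$ (equivalently over $c$, since $b\mapsto T_{a,\C}(b)$ is a bijection of $\D_i$) gives \eqref{MoebInv} exactly. The paper declares this invariance ``immediate''; the substitution argument above is what makes it so. Everything else in your write-up is correct.
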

\begin{proof}
It is immediate that $|f|_{\ast,i} = 0$ if and only if $f$ is constant, that $|f a|_{\ast,i} = |f|_{\ast,i}|a|$ and $|f + g|_{\ast,i} \leq |f|_{\ast,i} + |g|_{\ast,i}$ for $a\in\H$ and $f,g\in\BMOSH(\D)$ and that \eqref{MoebInv} holds true. Moreover, if we choose $j\in\S$ with $i\perp j$ and write $f_i = f_1 + f_2j$ with holomorphic components according to \Cref{SplitLem}, then $|f|_{*,i}^2 \leq |f_1|_{*,\C}^2 + |f_2|_{*,\C}^2$, where 
\[|f_{\ell}|_{*,\C} = \sup_{a\in\D_i}\frac{1}{2\pi} \int_{-\pi}^{\pi}\left|f_{\ell}\circ T_{a,\C}\left(e^{i\theta} \right) - f(a)\right|^2\,d\theta\]
 denotes the M\"obius invariant semi-norm on $\BMOA(\D_i)$. Hence, $|f|_{\ast,i}<+\infty$ for $f\in\BMOSH(\D)$. Moreover, after setting $\| f _{\ell}\|_{\ast,\C} := |f_{\ell}(0)| + |f_{\ell}|_{\ast,\C}$, the equivalence of $\|\cdot\|_{\ast,\C}$ and $\|\cdot\|_{\BMOA(\D_i)}$ in the complex case (cf. Theorem~5.1 in \cite{Girela}) and \eqref{BMOSplitEst} imply by computations as in the proof of \Cref{BMONorms} that 
\begin{gather*}
\| f \|_{\ast,i} \leq \|f_1\|_{\ast,\C} + \|f_2\|_{\ast,\C} \lesssim \| f_1 \|_{\BMOA(\D_i)} + \| f_2 \|_{\BMOA(\D_i)} \\
\lesssim \| f \|_{\BMOSH(\D)} \lesssim \|f_1\|_{\BMOA(\D_i)} + \|f_2\|_{\BMOA(\D_i)} \\
\lesssim \|f_1\|_{\ast,\C} + \| f_2\|_{\ast,\C} \lesssim \|f \|_{\ast,i},
\end{gather*}
where all constants are independent of $f$, $f_1$ and $f_2$. Hence, $\|\cdot\|_{\BMOSH(\D_i)}$ and $\|\cdot\|_{\ast,i}$ are equivalent norms.

\end{proof}

\begin{remark}
It is possible to define a slice-independent semi-norm by setting
\[ |f|_{\ast} := \sup_{i\in\S} |f|_{*,i} = \sup_{a\in\D}\sqrt{\frac{1}{2\pi} \int_{-\pi}^{\pi}\left|f\circ_i T_a\left(e^{i\theta} \right) - f(a)\right|^2\,d\theta}.\]
This semi-norm does however not satisfy any invariance property under M\"obius transformations.
\end{remark}

\begin{lemma}\label{BMOCarleson}
If $f\in\lhol(\D)$, then the following statements are equivalent.
\begin{enumerate}[(i)]
\item \label{azz}The function $f$ belongs to $\BMOSH(\D)$.
\item \label{azy}The measure $\mu_f$ given by
\begin{equation}\label{MuF}
 d\mu_f(s) = \frac1{s_1^2}(1-|s|^2)\left|\sderiv f(s)\right|^2 \,d\lambda_4(s)
 \end{equation}
is a Carleson measure.
\item\label{azx} The measure $\nu_f$ given by
\[ d\nu_f(s) = \frac{1}{s_1^2}\log\left(\frac{1}{|s|}\right)\left|\sderiv f(s)\right|^2 \,d\lambda_4(s)\]
is a Carleson measure.
\end{enumerate}
\end{lemma}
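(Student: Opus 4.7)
The plan is to exploit the Splitting Lemma to reduce the statement to the classical complex characterization: for $g$ holomorphic on the unit disc, $g \in \BMOA$ iff $(1-|z|^2)|g'(z)|^2\,d\lambda_2$ is a Carleson measure (equivalently, iff $\log(1/|z|)|g'(z)|^2\,d\lambda_2$ is). The bridge between slicewise and ambient Carleson statements is provided by \Cref{ExFMeas}: in spherical coordinates the $s_1^2$ Jacobian cancels the $1/s_1^2$ factor in the density of $\mu_f$, leaving on each slice $\D_i$ exactly the classical density $(1-|s|^2)|\sderiv f(s)|^2\,d\lambda_2$.

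For \textbf{(i) $\Rightarrow$ (ii)}, I would fix $i \in \S$ and $j \in \S$ with $i \perp j$. By \Cref{SplitLem}, $f_i = f_1 + f_2 j$ with $f_1, f_2 \colon \D_i \to \C_i$ holomorphic, and since $\sderiv f$ restricts to $f_1' + f_2' j$ on $\D_i$, one has $|\sderiv f|^2 = |f_1'|^2 + |f_2'|^2$. By \Cref{BMOComponents} and the classical characterization, each $(1-|s|^2)|f_\ell'(s)|^2\,d\lambda_2$ is a Carleson measure on $\D_i$ with constant $\lesssim \|f_\ell\|_{\BMOA(\D_i)}^2$; summing and using \eqref{BMOSplitEst} gives a Carleson bound $\lesssim \|f\|_{\BMOSH(\D)}^2$ for $(1-|s|^2)|\sderiv f|^2\,d\lambda_2$ \emph{uniformly in $i$}. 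Thus \Cref{ExFMeas} exhibits a slice Carleson decomposition of $\mu_f$, and \Cref{SC->C} yields that $\mu_f$ is a Carleson measure.

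For \textbf{(ii) $\Rightarrow$ (i)}, the main obstacle is that a global Carleson bound on $\mu_f$ does not a priori restrict to uniform slicewise bounds. The key is to invert the relation via the Representation Formula \Cref{RepFo}: since $\sderiv f$ is itself slice hyperholomorphic, for $r \in [0,1)$, $\theta \in \R$, $i_0 \in \S$ and arbitrary $k \in \S$,
\[ \sderiv f(re^{i_0\theta}) = \tfrac{1}{2}(1 - i_0 k)\sderiv f(re^{k\theta}) + \tfrac{1}{2}(1 + i_0 k)\sderiv f(re^{-k\theta}), \]
and $|1 \pm i_0 k| \leq 2$ gives $|\sderiv f(re^{i_0\theta})|^2 \leq 2|\sderiv f(re^{k\theta})|^2 + 2|\sderiv f(re^{-k\theta})|^2$. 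Averaging in $k$ against the surface measure $\sigma$ on $\S$ and using its invariance under $k \mapsto -k$ yields
\[ |\sderiv f(re^{i_0\theta})|^2 \leq \tfrac{4}{\sigma(\S)} \int_{\S} |\sderiv f(re^{k\theta})|^2 \, d\sigma(k). \]
Multiplying by $(1-r^2)r$, integrating in $(r,\theta)$ over the slice box and switching the order of integration (recognising the right-hand side via the spherical decomposition of \Cref{ExFMeas}), one obtains uniformly in $i_0$, $\theta_0$ and $h$ that
\[ \int_{S_{i_0}(\theta_0,h)} (1-|s|^2)|\sderiv f(s)|^2 \, d\lambda_2 \leq \tfrac{4}{\sigma(\S)}\,\mu_f(S(\theta_0,h)) \leq C' h. \]
Since $|\sderiv f|^2 \geq |f_\ell'|^2$ on $\D_{i_0}$, the classical characterization gives $f_1, f_2 \in \BMOA(\D_{i_0})$, and \Cref{BMOComponents} concludes $f \in \BMOSH(\D)$.

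Finally, \textbf{(ii) $\Leftrightarrow$ (iii)} follows from the slicewise classical equivalence of the two complex Carleson-type measures, which rests on the comparison $1 - |s|^2 \asymp \log(1/|s|)$ for $|s|$ bounded away from zero (and hence on the Carleson shells $\{1-h \leq |s| \leq 1\}$ that matter for small $h$). The same spherical-decomposition argument combined with \Cref{SC->C} and the representation-formula trick above then transfers the equivalence to the ambient level.
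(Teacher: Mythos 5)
Your overall strategy coincides with the paper's: reduce to the classical complex characterization via the Splitting Lemma, and pass between the slicewise measures $\rho_i$ with density $(1-|z|^2)\left|\sderiv f(z)\right|^2 d\lambda_2$ and the four-dimensional measure $\mu_f$ through the spherical disintegration of \Cref{ExFMeas}. Your treatment of (ii) $\Rightarrow$ (i) is a genuine, and arguably cleaner, variant: you average the pointwise representation-formula estimate over $k\in\S$ and integrate over a slice box to get $\rho_{i_0}(S_{i_0}(\theta_0,h))\lesssim \mu_f(S(\theta_0,h))$ directly, whereas the paper argues by contraposition, extracting sequences of boxes on which the Carleson ratio of $\rho_i$ blows up and propagating the blow-up to $\mu_f$. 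Both arguments rest on the same inequality (the paper's \eqref{RhoIEst} is the integrated form of your pointwise bound) and on the invariance of $\sigma$ under $k\mapsto -k$, so this is a difference of presentation rather than of substance.

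There is one real gap, in (i) $\Rightarrow$ (ii). The slice decomposition produced by \Cref{ExFMeas} consists of the \emph{normalized} probability measures $\mu_{f,i}^+ = \rho_i/C(i)$ on $\D_i^+$ with $C(i)=\rho_i(\D_i^+)$, together with $d\nu(i)=C(i)\,d\sigma(i)$; the definition of a slice Carleson decomposition, and hence the hypothesis of \Cref{SC->C}, requires a uniform Carleson bound for these \emph{normalized} measures. A uniform Carleson bound for the $\rho_i$ themselves therefore only suffices if $\inf_{i\in\S}C(i)>0$. This is not automatic: the paper proves it by a compactness argument combined with the Identity Principle (\Cref{IDPrinciple}), and this is exactly why the paper must dispose of constant $f$ (for which $C(i)\equiv 0$ and the normalization is undefined) at the outset. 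You could avoid the issue entirely by bypassing the normalized decomposition and writing $\mu_f(S(\theta_0,h))=\int_\S \rho_i\left(S(\theta_0,h)\cap\D_i^+\right)d\sigma(i)\leq 2\sigma(\S)Nh$ directly, but as written your appeal to \Cref{SC->C} is not justified. Finally, your reduction of (ii) $\Leftrightarrow$ (iii) to the comparison $1-|s|^2\asymp\log(1/|s|)$ near $\partial\D$ is acceptable, provided you also observe that $\nu_f$ has finite mass near the origin (the Jacobian $s_1^2$ cancels the factor $1/s_1^2$ and $\log(1/|s|)$ is locally integrable), so that large boxes cause no trouble.
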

\begin{proof}
We prove the equivalence  (\ref{azz}) and (\ref{azy}). The equivalence of (\ref{azz}) and (\ref{azx}) follows by analogous arguments. Moreover, we assume that $f$ is not constant, in which case the above statements are obviously all true.

Let $i,j \in\S$ and write $f_i = f_1 + f_2j$ with holomorphic components according to \Cref{SplitLem}. By \Cref{BMOSplit}, the function $f$ belongs to $\BMOSH(\D)$ if and only if $f_1,f_2\in\BMOA(\D_i)$. By \cite[Theorem~6.5]{Girela}, this is equivalent to 
\[ d\mu_{f_\ell}(z) = (1-|z|^2)\left|f_\ell'(z)\right|^2\, d\lambda_2(z), \quad \ell \in\{1,2\}\]
being Carleson measures on $\D_i$. Now observe that $\sderiv f(z) = f_1'(z) + f_2'(z)j$ and hence $|\sderiv f(z)|^2 = |f_1'(z)|^2 + |f_2'(z)|^2$ for $z\in\D_i$. Since $\mu_{f_\ell}$ for $\ell\in\{1,2\}$ are positive measures, we find that $f\in\BMOSH(\D)$ if and only if for any $i\in\S$ the measure $\rho_i = \mu_{f_1} + \mu_{f_2}$, which is given by
\[ d\rho_i(z) = (1-|z|^2) \left|\sderiv f(z)\right|^2\,d\lambda_2(z),\]
is a Carleson measure on $\D_i$.

Observe that the \Cref{RepFo} allows us to estimate the measure $\rho_{\jmath}$ for any $\jmath\in\S$ by the measure $\rho_i$: for any $E\in\borel(\C_{\jmath})$, we can set 
\[E_i = \{z_0+iz_1: z_0+\jmath z_1\in E\}\]
 and obtain
\begin{gather*}
\rho_{\jmath}(E) = \int_{E}(1-|z|^2)\left|\sderiv f(z)\right|^2\,dz\\
= \int_{E}(1-|z|^2)\left|\frac{1}{2}(1-\jmath i)\sderiv f(z_i) + \frac{1}{2}(1+\jmath i)\sderiv f(\overline{z_i})\right|^2\,dz\\
\leq 2 \int_{E_i}(1-|z|^2)\left|\sderiv f(z)\right|^2\,dz +  2 \int_{\overline{E_i}}(1-|z|^2)\left|\sderiv f(z)\right|^2\,dz
\end{gather*}
such that
\begin{equation}\label{RhoIEst}
 \rho_{\jmath}(E) \leq 2 \rho_i(E_i) + 2 \rho_i(\overline{E_i})
\end{equation}

We set $C(i) := \rho_i(\D_i^+)$. \Cref{RepFo} implies that the map $i\mapsto C(i)$ is continuous and hence the constants  $C(i)$ are bounded uniformly from below by a constant $\tau>0$. Otherwise there exists a sequence $(i_n)_{n\in\N}$ of imaginary units such that $C(i_n)\to 0$ as $n\to \infty$. If necessary after passing to a subsequence, we may assume that the $i_n$ converge to some $i_0\in\S$ and we obtain $C(i_0) = \lim_{n\to+\infty}C(i_n) = 0$. This however implies that $\sderiv f \equiv 0$ on $\D_{i_0}$ and so in turn  \Cref{IDPrinciple} implies that $\sderiv f \equiv 0$ on all of $\D$. Hence, $f$ is constant, but this case was excluded by the assumption made above. 

We define now measures $\mu_{f,i}^+$ for $i\in\S$ by setting $\mu_{f,i}^+(E):=\frac{1}{C(i)}\rho_i(E)$ for all $E\in\borel(\D_i^+)$ and a measure $\nu$ on $\S$ by $d\nu(i) := C(i)\,d\sigma(i)$, where $\sigma$ is the surface measure on $\S$. By \Cref{ExFMeas}, these measures constitute a slice decomposition of $\mu_f$.

If now $f\in\BMOSH(\D)$, then the measures $\mu_{f,i}^{+}$ are uniformly Carleson: indeed, if $N$ is the Carleson bound of the measure $\rho_i$ for some fixed $i\in\S$, then \eqref{RhoIEst} implies for any $\theta_0\in[0,2\pi)$, any $h>0$, and any $\jmath\in\S$ that
\begin{gather*}
\mu_{f,\jmath}^+\left(S_{\jmath}(\theta_0,h)\right)  = \frac{1}{C(\jmath)} \rho_{j}\left(S_{\jmath}(\theta_0,h)\cap\C_{\jmath}^+\right)\\
 \leq \frac{2}{\tau}\left(\rho_i\left(S_{i}(\theta_0,h)\right) + \rho_i\left(S_{i}(2\pi-\theta_0,h)\right) \right) \leq \frac{4N}{\tau}h.
\end{gather*}
Hence, the slice decomposition of $\mu_f$ is slice Carleson and we deduce from \Cref{SC->C} that $\mu_f$ is a Carleson measure.

If on the other hand $f\notin\BMOSH(\D)$ and we choose an arbitrary $i\in\S$, then  $\rho_i$ is not Carleson. Hence, we can find two sequences $(\theta_n)_{n\in\N}$ and $(h_n)_{n\in\N}$ such that after setting $S_{i,n}:= S_i(\theta_n,h_n) + \overline{S_i(\theta_n,h_n)}$ we have
\[4nh_n < \rho_i(S_i(\theta_n,h_n)) \leq \rho_i(S_{i,n}).\]
If we set $S_{\jmath,n} = \{s_0 + \jmath s_1: s_0 + is_1\in S_{i,n}\}$ for arbitrary $\jmath\in\S$, then we deduce from \eqref{RhoIEst} because of $\overline{S_{\jmath,n} }= S_{\jmath,n}$ that
\[ 4nh_n < \rho_i(S_{i,n}) \leq 4 \rho_{\jmath}(S_{\jmath,n})\]
and hence $nh_n \leq \rho(S_{\jmath,n})$. Now observe that $S_{\jmath,n} = S(\theta_n,h_n)\cap\C_{\jmath}$ such that
\begin{gather*}
\mu_f(S(\theta_n,h_n)) = \int_{\S}\mu_{f,i}^{+}(S(\theta_n,h_n)\cap \D_i^+) C(i)\,d\sigma(i)\\
 = \int_{\S}\rho_i(S(\theta_n,h_n)\cap \D_i^+)\,d\sigma(i)\\
 = \frac{1}{2}  \int_{\S}\rho_i(S_{i,n})\,d\sigma(i) > \frac{\sigma(\S)}{2}nh_n,
\end{gather*}
where the third equality follows because $\sigma$ is invariant under the mapping $i\mapsto -i$ (cf. \Cref{ArcozziRem}). 
As a consequence $\mu_f$ is not a Carleson measure and the proof is complete.

\end{proof}
\begin{remark}
One could also wonder if a function $f\in\lhol(\D)$ belongs---just in analogy with the complex case---to $\BMOSH(\D)$ if and only if the measures defined by 
\[d\tilde{\mu}_f(s) = (1-|s|)^2\left|\sderiv f(s)\right|^2\,d\lambda_4(s)\]
resp. 
\[d\tilde{\nu}_f(s)  = \log(1/|s|^2)\left|\sderiv f(s)\right|^2\,d\lambda_4(s)\]
are Carleson measures. This is however not true. An easy counterexample is the function $f(s) = 1/\sqrt{1+s}$, where the square root is defined and slice hyperholomorphic on $\H\setminus(-\infty,0]$.

Indeed, for $i\in\S$, we have
\[ \int_{-\pi}^{\pi} |f(e^{i\theta})|^2\,d\theta = \lim_{\varepsilon\to 0^+}\int_{-(\pi-\varepsilon)}^{\pi - \varepsilon} \frac{1}{\sqrt{2+2\cos\theta}}\,d\theta.\]
A primitive of the function $1/\sqrt{2+2\cos\theta}$ on the interval $(-\pi,\pi)$ is given by $F(\theta) = -\log\left(\frac{1-\tan(\theta/4) }{1+\tan(\theta/4)}\right)$ such that
\begin{align*}
 &\int_{-\pi}^{\pi} |f(e^{i\theta})|\,d\theta \\
 =&\lim_{\varepsilon\to 0^+}\left(-\log\left( \frac{1-\tan\left(\frac{\pi - \varepsilon}{4}\right)}{1+\tan\left(\frac{\pi - \varepsilon}{4}\right)}\right)+ \log\left(\frac{1-\tan\left(\frac{-\pi + \varepsilon}{4}\right)}{1+\tan\left(\frac{-\pi + \varepsilon}{4}\right)}\right)\right) = +\infty.
\end{align*}
Hence, the restriction of $f$ to $\D_i$, which coincides with the holomorphic component function $f_1$ obtained from \Cref{SplitLem}, does not belong to the Hardy space $\hardy_{\C}^2(\D_i)$ and in turn neither to $\BMOA(\C_i)$. By \Cref{BMOSplit}, we thus have $f\notin\BMOSH(\D)$. Nevertheless, from \Cref{ExFMeas}, we obtain that a slice decomposition of $\tilde{\mu}_f$ is given by
\begin{equation*}
\left\{\begin{array}{rcl}
\displaystyle d\nu(i) &=& \displaystyle C \,d\sigma(i)\\
\displaystyle d\tilde{\mu}_{f,i}^+(z)& =&\displaystyle \frac{1}{C}  (1-|z|^2)z_1^2 |\sderiv f(z)|^2\,d\lambda_2(z),
\end{array}\right.
\end{equation*}
where $z= z_0 + i z_1$ and with $C = C(i) = \int_{\D_i^+}(1-|z|^2)z_1^2 |\sderiv f(z)|^2\,d\lambda_2(z)$ and $\partial_Sf(s) = -\frac12(1+s)^{-\frac{3}{2}}$.  Observe that the densities of the measures $\tilde{\mu}_{f,i}^+$ are uniformly bounded, because the term $(1-|z|^2)z_1^2$ compensates the singularity at $-1$: for $z=-1+re^{i\theta}\in\D_i$, we have
\begin{gather*}
(1-|z|^2)z_1^2 |\sderiv f(z)|^2 = \left(1 - (1- r\cos(\theta))^2 -r^2\sin(\theta)^2\right) r^2\sin(\theta)^2\frac{1}{4}\frac{1}{r^3}\\
 = \frac{1}{4}(2\cos(\theta) - r)\sin(\theta)^2 < K<+\infty.
\end{gather*}
Thus,  $C<+\infty$ and the measures $\mu_i^+$ are uniformly Carleson. From \Cref{SC->C} we thus deduce that $\tilde{\mu}_{f}$ is a Carleson measure on $\D$ although $f\notin\BMOSH(\D)$. Similar computations can be done for $\tilde{\nu}_f$. 

\end{remark}

\section{The space $\VMOSH(\D)$}\label{VMOSec}

The space $\VMOA(\D_{\C})$ of holomorphic functions of vanishing mean oscillation on the complex unit disc $\D_\C$ is the space of all functions $f\in\BMOA(\D_{\C})$ such that
\[ \lim_{I\subset\R,|I|\to 0} \int_{I}\left| f\left(e^{\imath\theta}\right)- f_I\right|^2\,d\theta = 0,\]
where we use the same notation as in the beginning of \Cref{BMOSect}. This motivates the following definition.
\begin{definition}
The space $\VMOSH(\D_i)$ consists of all those functions $f\in\BMOSH(\D)$ such that 
\begin{equation}\label{VMOSRCond}
\lim_{I\subset\R, |I|\to 0} \frac{1}{|I|} \int_{I} \left|f\left(e^{i\theta}\right) - f_{I,i}\right|^2\,d\theta = 0\qquad\forall i\in\S.
\end{equation}
\end{definition}
\begin{remark}\label{VMOComment}
Observe that it is sufficient that \eqref{VMOSRCond} holds true for one $i\in\S$. The representation formula then implies that it holds true for all $i\in\S$, cf. the proof of \cite[Proposition~5.2]{Arcozzi}.
\end{remark}

\begin{lemma}\label{VMOSplit}
Let $i,j\in\S$ with $i\perp j$, let $f\in\lhol(\D)$ and write $f_i = f_1 +f_2j$ according to \Cref{SplitLem} with holomorphic components $f_1,f_2:\D_i\to\C_i$. Then $f\in\VMOSH(\D)$ if and only if its components $f_1$ and $f_2$ belong to $\VMOA(\D_i)$.
\end{lemma}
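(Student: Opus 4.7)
The plan is to mirror the argument used for \Cref{BMOComponents} (which is already contained in the computation inside the proof of \Cref{BMONorms}), applied this time to the $|I|\to 0$ limit rather than the supremum over $I$. The whole argument hinges on the pointwise orthogonality identity $|a_1+a_2j|^2 = |a_1|^2 + |a_2|^2$ valid for $a = a_1+a_2j$ with $a_1,a_2\in\C_i$, together with the observation that $\C_i$-valued integrands produce $\C_i$-valued averages.

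First I would verify that the average value $f_{I,i}$ splits consistently with the splitting $f_i = f_1+f_2j$. Since $f_1(e^{i\theta}),f_2(e^{i\theta})\in\C_i$, the $\C_i$-linearity of the integration in $\theta$ yields
\[
f_{I,i} = \frac{1}{|I|}\int_I \bigl(f_1(e^{i\theta}) + f_2(e^{i\theta})j\bigr)\,d\theta = f_{I,1} + f_{I,2}j,
\]
where $f_{I,\ell} := \frac{1}{|I|}\int_I f_{\ell}(e^{i\theta})\,d\theta$. Applying the orthogonality identity pointwise to $a = f(e^{i\theta}) - f_{I,i} = (f_1(e^{i\theta}) - f_{I,1}) + (f_2(e^{i\theta}) - f_{I,2})j$ and integrating over $I$ then recovers the analogue of equation~\eqref{AGH}, namely
\[
\frac{1}{|I|}\int_I \bigl|f(e^{i\theta}) - f_{I,i}\bigr|^2 d\theta = \frac{1}{|I|}\int_I \bigl|f_1(e^{i\theta}) - f_{I,1}\bigr|^2 d\theta + \frac{1}{|I|}\int_I \bigl|f_2(e^{i\theta}) - f_{I,2}\bigr|^2 d\theta.
\]

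Since the two summands on the right are non-negative, the left-hand side tends to $0$ as $|I|\to 0$ if and only if both right-hand terms do. By \Cref{VMOComment} it suffices to check condition~\eqref{VMOSRCond} for the single imaginary unit $i$ used in the splitting, so the displayed equivalence transfers directly. Combining this with \Cref{BMOComponents}, which ensures that $f\in\BMOSH(\D)$ if and only if $f_1,f_2\in\BMOA(\D_i)$, one obtains $f\in\VMOSH(\D)$ iff $f_1,f_2\in\VMOA(\D_i)$. I do not expect any real obstacle here: the whole proof is a one-line reduction to the complex case via orthogonality, and the only care point is ensuring the clean splitting of $f_{I,i}$ under averaging, which is handled in the first step.
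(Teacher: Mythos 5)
Your proposal is correct and follows essentially the same route as the paper's proof: both rest on the orthogonality identity \eqref{aai} giving the splitting formula \eqref{AGH}, the non-negativity of the two component integrals, and \Cref{VMOComment} to reduce to a single slice. Your explicit verification that $f_{I,i}=f_{I,1}+f_{I,2}j$ and the appeal to \Cref{BMOComponents} for the underlying $\BMOSH$ membership are details the paper leaves implicit, but they do not change the argument.
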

\begin{proof}
Recall formula \eqref{AGH}, which establishes a relation between the integral in \eqref{VMOSRCond} and the respective integrals for the component functions:
\begin{equation*}\begin{split}
& \frac{1}{|I|} \int_{I} |f(e^{i\theta}) - f_{I,i}|^2\,d\theta \\
=&\frac{1}{|I|} \int_{I} |f_1(e^{i\theta}) - f_{I,1}|^2\,d\theta +  \frac{1}{|I|} \int_{I} |f_2(e^{i\theta}) - f_{I,2}|^2\,d\theta.
\end{split}\end{equation*}
\Cref{VMOComment} implies now that $f$ belongs $\VMOSH(\D)$ if and only if the left-hand side tends to zero as $|I|$ tends to zero. But this happens obviously if and only if both integrals on the right-hand side tend to zero as $|I|$ tends to zero, i.e. if and only if $f_1$ and $f_2$ belong to the classical $\VMOA(\D_i)$. .

\end{proof}

\begin{lemma}\label{VMOC}
Let $f\in\BMOSH(\D)$. Then the following statements are equivalent.
\begin{enumerate}[(i)]
\item\label{VMOCi} $f\in\VMOSH(\D)$.
\item\label{VMOCii} The functions $f_r$, which are defined for $r\in[0,1]$ as $f_r(s) := f(rs)$, tend to $f$ in $\BMOSH(\D)$ as $r\to 1^+$. 
\item\label{VMOCiii} $f$ belongs to the $\BMOSH$-closure of 
\[\mathcal{A}(\D) := \left\{f\in\lhol(\D): \text{$f$ has a continuous extension to $\overline{\D}$}\right\}.\] 
\item\label{VMOCiv} $f$ belongs to the $\BMOSH$-closure of the set of left slice hyperholomorphic polynomials. 
\end{enumerate}
Consequently, $\VMOSH(\D)$ is a closed subspace of $\BMOSH(\D)$. 
\end{lemma}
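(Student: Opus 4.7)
The overall plan is to reduce every statement to its classical complex counterpart via the Splitting Lemma, and then invoke the known characterization of $\VMOA(\D_i)$ (as in, e.g., \cite[Ch.~5]{Girela}). Fix once and for all $i,j\in\S$ with $i\perp j$, and for $f\in\BMOSH(\D)$ write $f_i=f_1+f_2j$ with holomorphic components $f_1,f_2\colon\D_i\to\C_i$ as in \Cref{SplitLem}. By \Cref{VMOSplit} condition (\ref{VMOCi}) is equivalent to $f_1,f_2\in\VMOA(\D_i)$, and the splitting estimate \eqref{BMOSplitEst} will let us transfer the $\BMOSH(\D)$-norm convergence back and forth between $f$ and its components.

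For (\ref{VMOCi})$\Leftrightarrow$(\ref{VMOCii}), I would first note that dilation commutes with the splitting: $(f_r)_i=(f_1)_r+(f_2)_rj$, where $(f_\ell)_r(z):=f_\ell(rz)$. Applying \eqref{BMOSplitEst} to $f_r-f$ gives
\[
\|(f_\ell)_r-f_\ell\|_{\BMOA(\D_i)}\lesssim\|f_r-f\|_{\BMOSH(\D)}\lesssim \|(f_1)_r-f_1\|_{\BMOA(\D_i)}+\|(f_2)_r-f_2\|_{\BMOA(\D_i)},
\]
so $\|f_r-f\|_{\BMOSH(\D)}\to 0$ holds iff $\|(f_\ell)_r-f_\ell\|_{\BMOA(\D_i)}\to 0$ for $\ell=1,2$. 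The latter is a classical characterization of $\VMOA(\D_i)$, closing the equivalence.

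For (\ref{VMOCi})$\Leftrightarrow$(\ref{VMOCiii})$\Leftrightarrow$(\ref{VMOCiv}), I would again pass through the splitting. If $g\in\lhol(\D)$ extends continuously to $\overline{\D}$, then $g_i$ does, and by \Cref{SplitLem} so do its components; conversely, if the components extend continuously, then $g_i$ extends continuously, and the Representation Formula \eqref{Reppp} shows that $g$ itself extends continuously to all of $\overline{\D}$. Hence under the splitting, $\mathcal{A}(\D)$ corresponds exactly to the complex disc algebra $\mathcal{A}(\D_i)$ (plus the coefficient factor $j$), and the slice hyperholomorphic polynomials correspond to pairs of ordinary polynomials. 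Using \eqref{BMOSplitEst} once more, the $\BMOSH(\D)$-closures of $\mathcal{A}(\D)$ and of the polynomials are precisely the preimages under the splitting of the $\BMOA(\D_i)$-closures of $\mathcal{A}(\D_i)$ and of the polynomials. In the classical setting both of these closures coincide with $\VMOA(\D_i)$, so (\ref{VMOCiii}) and (\ref{VMOCiv}) both reduce to $f_1,f_2\in\VMOA(\D_i)$, i.e.\ to (\ref{VMOCi}).

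Finally, closedness of $\VMOSH(\D)$ in $\BMOSH(\D)$ is an immediate consequence of (\ref{VMOCiii}) (or (\ref{VMOCiv})): $\VMOSH(\D)$ is, by equivalence, a norm-closure in $\BMOSH(\D)$. The step I expect to be the most delicate is the bookkeeping in (\ref{VMOCi})$\Leftrightarrow$(\ref{VMOCii}): one must verify cleanly that the splitting components of $f_r$ really are $(f_1)_r$ and $(f_2)_r$ (which follows from uniqueness of the decomposition in \Cref{SplitLem}), and that the two-sided estimate \eqref{BMOSplitEst} can be applied to the \emph{difference} $f_r-f$ rather than to the functions individually; everything else is a routine translation of the classical theorem.
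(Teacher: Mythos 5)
Your proposal is correct and follows essentially the same route as the paper: reduce each condition to the corresponding statement for the holomorphic components $f_1,f_2$ via the Splitting Lemma and \Cref{VMOSplit}, transfer norm convergence back and forth with the two-sided estimate \eqref{BMOSplitEst}, and invoke the classical characterizations of $\VMOA(\D_i)$ from \cite{Girela}. The only difference is cosmetic: you spell out the bookkeeping (dilation commuting with the splitting, continuity of extensions via the Representation Formula) that the paper leaves implicit.
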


\begin{proof}
Let $i,j\in\S$ with $i\perp j$ and apply \Cref{SplitLem} in order to write $f_i = f_1 + f_2j$ with holomorphic functions $f_1,f_2: \D_i\to\C_i$. By \Cref{VMOSplit}, $f\in\VMOSH(\D_i)$ if and only if $f_1,f_2\in\VMOA(\D_i)$. Moreover, we saw in \eqref{BMOSplitEst} that
\begin{equation}\label{KKK}
 \| f_{\ell} \|_{\BMOA(\D_i)} \lesssim  \|f\|_{\BMOSH(\D)} \lesssim \|f_1\|_{\BMOA(\D_i)} +\|f_2\|_{\BMOA(\D_i)}.
 \end{equation}
Thus, $f_r$ tends to $f$ in $\BMOSH(\D)$ if and only if $f_{1,r}(z):=f_1(rz)$ and $f_{2,r}(z):=f_{2}(rz)$ tend to $f_1$ resp. $f_2$ in $\BMOA(\D_i)$. By \cite[Theorem~2.1, Theorem~5.5]{Girela} this is equivalent to  $f_1,f_2\in\VMOA(\D_i)$, which is  by \Cref{VMOSplit} in turn equivalent to $f\in\VMOSH(\D)$. Hence, $(\ref{VMOCi})\Leftrightarrow(\ref{VMOCii})$.

Similarly, we show the equivalence of $(\ref{VMOCi})$ and $(\ref{VMOCiii})$: the function $f$ belongs to $\VMOSH(\D)$ if and only if $f_1,f_2\in\VMOA(\D_i)$. By \cite[Theorem~5.5]{Girela}, this is equivalent to the existence of functions $\tilde{f}_{n,1},\tilde{f}_{n,2}\in\mathcal{A}_{\C}(\D_i)$ such that $\tilde{f}_{n,\ell} \to f_{\ell}$ in $\BMOA(\D_i)$. By \Cref{SplitLem} and \Cref{ExtLem} and \eqref{KKK} this is in turn equivalent to the existence of functions $f_n\in\mathcal{A}(\D)$ such that $f_n\to f$  in $\BMOSH(\D)$.

Finally, the equivalence of $(\ref{VMOCi})$ and $(\ref{VMOCiv})$ follows again by analogous arguments from the respective result in the complex case in \cite[Theorem~5.5]{Girela} and  \Cref{PolyExt}.

\end{proof}

\begin{remark}
In the classical complex theory, one can choose in item (\ref{VMOCii}) of the preceeding lemma $r\in\C$ with $|r|\leq 1$ but not necessarily real. In the quaternionic case this is not possible because multiplying the argument with a constant factor does not preserve slice hyperholomorphicity unless this factor is real.
\end{remark}

\begin{lemma}
Let $f\in\lhol(\D)$ and let $\mu_f$ and $\nu_f$ be defined as in \Cref{BMOCarleson}. The following statements are equivalent:
\begin{enumerate}[(i)]
\item The function $f$ belongs to $\VMOSH(\D)$.
\item The measure $\mu_f$ is a vanishing Carleson measure.
\item The measure $\nu_f$ is a vanishing Carleson measure.
\end{enumerate}
\end{lemma}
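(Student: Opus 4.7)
The plan is to mirror the proof of \Cref{BMOCarleson} step by step, replacing uniform Carleson bounds by vanishing Carleson conditions and invoking \Cref{VSC->VC} in place of \Cref{SC->C}. I will only sketch the equivalence of (i) and (ii); the equivalence of (i) and (iii) follows by the same argument applied to $\nu_{f}$, since the classical characterization of $\VMOA(\D_i)$ via vanishing Carleson measures holds with both weights $(1-|z|^2)$ and $\log(1/|z|)$. As in the BMO case, one may assume that $f$ is non-constant, since the statement is trivial otherwise.

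First, fix $i,j\in\S$ with $i\perp j$ and write $f_i=f_1+f_2j$ with holomorphic components $f_1,f_2\colon\D_i\to\C_i$ by the Splitting Lemma. By \Cref{VMOSplit}, $f\in\VMOSH(\D)$ if and only if $f_1,f_2\in\VMOA(\D_i)$. The classical characterization (e.g.\ Theorem~6.5 in \cite{Girela} together with the corresponding $\VMOA$ version in \cite[Theorem~5.5]{Girela}) tells us this is equivalent to the measures $d\mu_{f_\ell}(z)=(1-|z|^2)|f_\ell'(z)|^2\,d\lambda_2(z)$ being vanishing Carleson measures on $\D_i$. Since $\sderiv f(z)=f_1'(z)+f_2'(z)j$ on $\D_i$, we have $|\sderiv f(z)|^2=|f_1'(z)|^2+|f_2'(z)|^2$, so $f\in\VMOSH(\D)$ if and only if the measure
\[ d\rho_i(z) := (1-|z|^2)\left|\sderiv f(z)\right|^2\,d\lambda_2(z)\]
is a vanishing Carleson measure on $\D_i$, for any (equivalently every) $i\in\S$. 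The same representation-formula estimate derived in the proof of \Cref{BMOCarleson} yields
\[\rho_{\jmath}(E)\leq 2\rho_i(E_i)+2\rho_i(\overline{E_i})\qquad\forall E\in\borel(\C_\jmath),\]
which, applied to Carleson boxes, shows that the vanishing Carleson property of $\rho_i$ for one fixed $i$ implies it for every $\jmath\in\S$, uniformly in $\jmath$.

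For the implication (i)$\Rightarrow$(ii), I would reuse the slice decomposition of $\mu_f$ constructed in the proof of \Cref{BMOCarleson}, namely $\mu_\R=0$, $d\nu(i)=C(i)\,d\sigma(i)$, and $d\mu_{f,i}^+=\frac{1}{C(i)}\rho_i$, where $C(i)=\rho_i(\D_i^+)$ is bounded below by some $\tau>0$ via continuity in $i$ and the Identity Principle. Assuming $f\in\VMOSH(\D)$, let $\mathcal{E}(h)$ be a vanishing function such that $\rho_i(S_i(\theta_0,h))\leq \mathcal{E}(h)h$ for all $\theta_0$ and the fixed $i$; then the box estimate above gives, for each $\jmath\in\S$,
\[\mu_{f,\jmath}^+(S_\jmath(\theta_0,h))\leq\frac{2}{\tau}\bigl(\rho_i(S_i(\theta_0,h))+\rho_i(S_i(2\pi-\theta_0,h))\bigr)\leq\frac{4}{\tau}\mathcal{E}(h)h,\]
uniformly in $\theta_0$ and $\jmath$. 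Thus the slice decomposition is vanishing slice Carleson and \Cref{VSC->VC} yields (ii).

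For the converse (ii)$\Rightarrow$(i), I argue by contraposition: if $f\notin\VMOSH(\D)$, choose $i\in\S$; then $\rho_i$ fails to be vanishing Carleson on $\D_i$, so there exist $\varepsilon_0>0$ and sequences $\theta_n\in[0,2\pi)$ and $h_n\to 0$ with $\rho_i(S_i(\theta_n,h_n))\geq\varepsilon_0 h_n$. Setting $S_{i,n}:=S_i(\theta_n,h_n)\cup\overline{S_i(\theta_n,h_n)}$ and $S_{\jmath,n}:=\{s_0+\jmath s_1:s_0+is_1\in S_{i,n}\}$, the box estimate together with the invariance of $\sigma$ under $i\mapsto -i$ gives, exactly as in the proof of \Cref{BMOCarleson},
\[\mu_f(S(\theta_n,h_n))=\frac{1}{2}\int_\S\rho_i(S_{i,n})\,d\sigma(i)\geq \frac{\sigma(\S)}{2}\varepsilon_0 h_n,\]
so $\mu_f(S(\theta_n,h_n))/h_n$ does not tend to zero, contradicting (ii). The main obstacle is bookkeeping the uniformity in $\theta_0$ and $i$ when transferring the vanishing condition back and forth between $\rho_i$ and $\mu_f$; but the argument is entirely parallel to the BMO case, once \Cref{VSC->VC} is taken as the substitute for \Cref{SC->C}.
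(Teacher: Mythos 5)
Your proposal follows essentially the same route as the paper's own proof: split $f_i=f_1+f_2j$, reduce via \Cref{VMOSplit} and the classical vanishing-Carleson characterization of $\VMOA(\D_i)$ to the measure $\rho_i$, reuse the slice decomposition and the estimate \eqref{RhoIEst} from \Cref{BMOCarleson}, invoke \Cref{VSC->VC} for (i)$\Rightarrow$(ii), and argue the converse by contraposition with sequences $h_n\to 0$ and the $\sigma$-invariance under $i\mapsto -i$. The argument is correct (your version of the box estimate with the factor $4/\tau$ is in fact slightly more careful than the paper's), so nothing further is needed.
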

\begin{proof}
We adopt the notation used in the proof of \Cref{BMOCarleson}. From the complex theory, cf. \cite[Theorem~6.6]{Girela}, and \Cref{VMOSplit}, it follows that $f\in\VMOSH(\D)$ if and only if the measures $\mu_{f_1}$ and $\mu_{f_2}$ and in turn also the measure $\rho_i$ are vanishing Carleson measures for any choice of $i,j\in\S$ with $i\perp j$.

If $f\in\VMOSH(\D)$, choose $i\in\S$ and let $b(\varepsilon)$ with $\lim_{\varepsilon\to 0^+}b(\varepsilon) \to 0$ be such that $\rho_i(S_i(\theta_0,h))\leq h b(h)$. Then \eqref{RhoIEst} implies for any $\jmath\in\S$, any $\theta_0\in[0,\pi]$, and any $h>0$ that
\begin{gather*}
 \mu_{f,\jmath}^+(S_{\jmath}(\theta_0,h)) \leq \rho_{\jmath}(S_{\jmath}(\theta_0,h))\\
  \leq 2 \rho_i(S_{i}(\theta_0,h)) + 2 \rho_{i}(S_{i}(2\pi - \theta_0,h)) \leq 4 h b(h).
 \end{gather*}
Thus, the measures $\nu$ and $\mu_{f,i}^+$, which are defined as in the proof of \Cref{BMOCarleson},  constitute a vanishing slice  Carleson decomposition of $\mu_f$. We deduce from \Cref{VSC->VC} that $\mu_f$ is a vanishing Carleson measure.

If on the other hand $f\notin\VMOSH(\D)$, then $\rho_i$ is not a Carleson measure and hence there exist some $\varepsilon >0 $ and sequences $(\theta_n)_{n\in\N}$ and $(h_n)_{n\in\N}$ with $h_n \to 0 $ as $n\to+\infty$ such that with the position $S_{i,n} = S_{i}(\theta_n,h_n) \cup \overline{S_{i}(\theta_n,h_n)}$ we have
\[ 4\varepsilon  < \frac{\rho_i(S_{i}(\theta_n, h_n))}{h_n} \leq \frac{\rho_i(S_{i,n})}{h_n}.\]
If we set $S_{\jmath,n} = \{s_0 + js_1: s_0 + i s_1\in S_{i,n}\}$ for $j\in\S$, then the estimate \eqref{RhoIEst} implies because of $S_{\jmath,n} = \overline{S_{\jmath,n}}$ that
\[ 4\varepsilon < \frac{\rho_i(S_{i,n})}{h_n} \leq 4 \frac{\rho_{\jmath}(S_{\jmath,n})}{h_n}\]
and hence $\varepsilon < \rho_{\jmath}(S_{\jmath,n})/h_n$ for any $\jmath\in\S$. Thus, since $S_{\jmath,n} = S(\theta_n,h_n)\cap\C_{\jmath}$, we have
\begin{gather*}
\frac{\mu_f(S(\theta_n,h_n))}{h_n} = \frac{1}{h_n}\int_{\S} \mu_{f,i}^+(S(\theta_n,h_n)\cap\D_i^+) C(i)\,d\sigma(i)\\
= \frac{1}{h_n}\int_{\S} \rho_i(S(\theta_n,h_n)\cap\D_i^+)\,d\sigma(i)\\
= \frac{1}{2 h_n} \int_{\S} \rho_{i}(S_{i,n})\,d\sigma(i) > \frac{\sigma(\S)}{2}\varepsilon,
\end{gather*}
where the third equality follows because $\sigma$ is invariant under the mapping $i\mapsto -i$. Consequently, $\mu_f$ is not a Carleson measure and hence (i) and (ii) are equivalent.

The equivalence of (i) and (iii) can be shown with similar arguments.

\end{proof}

We conclude this section with two results, one showing that $f \in\VMOSH(\D)$ if there exists a suitable majorant of its slice derivative and  one concerning power series with Hadamard gaps in $\VMOSH(\D)$.

\begin{lemma}Let $f\in\lhol(\D)$. If there exists a  monotone increasing function $\phi(r)$ for $0<r <1 $ such that 
\begin{enumerate}[(i)]
\item $|\sderiv f(s)|\leq \phi(r)$ for all $s\in \D$ with  $|s|=r  $  and any $r\in (0,1) $ and 
\item $\displaystyle   \int_0^1 (1-r^2) \phi^2(r)dr< + \infty ,$
\end{enumerate}
then $f \in \VMOSH(\D)$.
\end{lemma}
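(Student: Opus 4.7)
The plan is to invoke the preceding lemma, which characterizes $\VMOSH(\D)$ as the set of $f\in\lhol(\D)$ for which the measure $\mu_f$ defined in \eqref{MuF} is a vanishing Carleson measure on $\D$. Accordingly, it suffices to show that under hypotheses (i)--(ii), the quotient $\mu_f(S(\theta_0,h))/h$ tends to $0$ as $h\to 0$, uniformly in $\theta_0\in[0,\pi]$.

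The key computation is to rewrite $\mu_f$ using the polar decomposition of $\lambda_4$ on $\H\cong\R\times\R^3$. Writing $s = s_0 + s_1 i_s$ with $s_1=|\underline{s}|\geq 0$ and $i_s\in\S$, polar coordinates on the imaginary $\R^3$-part give $d\lambda_4(s) = s_1^2\,ds_0\,ds_1\,d\sigma(i_s)$. The factor $1/s_1^2$ in \eqref{MuF} is therefore cancelled, and after switching to polar coordinates $(r,\tau)$ on each slice $\C_{i_s}$ via $s_0=r\cos\tau,\ s_1=r\sin\tau$, so that $d\lambda_2(s_0,s_1) = r\,dr\,d\tau$, the measure $\mu_f$ reads
\[ d\mu_f(s) = (1-r^2)|\sderiv f(s)|^2\, r\,dr\,d\tau\,d\sigma(i_s).\]
On the box $S(\theta_0,h)$, the variable $r$ is confined to $[1-h,1]$ while the angular variable $\tau$ (once expressed through the $s_1\geq 0$ convention) is confined to an angular set of one-dimensional measure at most $4h$. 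Applying $|\sderiv f(s)|\leq \phi(r)$ and integrating out the sphere $\S$ using $\sigma(\S)=4\pi$, I obtain
\[ \mu_f(S(\theta_0,h)) \leq 16\pi\, h \int_{1-h}^{1}(1-r^2)\phi(r)^2 r\,dr.\]

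Dividing by $h$, the bound becomes independent of $\theta_0$, and hypothesis (ii) guarantees that the tail integral $\int_{1-h}^{1}(1-r^2)\phi(r)^2 r\,dr$ (a tail of an absolutely convergent integral) tends to $0$ as $h\to 0$. Hence $\mu_f$ is a vanishing Carleson measure and $f\in\VMOSH(\D)$.

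The only place where one has to be attentive is the angular bookkeeping when translating the definition of $S(\theta_0,h)=\bigcup_{i\in\S}S_i(\theta_0,h)$ into the $(r,\tau,i_s)$ parametrization with $s_1\geq 0$: a point $re^{i\tau}$ with $\tau$ negative sits in the lower half of $\D_i$ but represents $s$ with $i_s=-i$ and angle $-\tau>0$. This is what turns the natural angular interval of length $2h$ into one of length at most $4h$ and merely affects the universal constant in the final estimate, so it does not obstruct the vanishing Carleson conclusion.
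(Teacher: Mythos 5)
Your proof is correct, but it takes a genuinely different route from the paper's. The paper argues slice-wise: it applies the Splitting Lemma to write $f_i = f_1 + f_2 j$, observes that $|\sderiv f(z)|^2 = |f_1'(z)|^2 + |f_2'(z)|^2$ so that $\phi$ majorizes $|f_\ell'|$ as well, and then quotes the corresponding complex majorant criterion from \cite{danikas} to conclude $f_1,f_2\in\VMOA(\D_i)$, whence $f\in\VMOSH(\D)$ by \Cref{VMOSplit}. You instead verify directly that the measure $\mu_f$ from \eqref{MuF} is a vanishing Carleson measure and invoke the Carleson characterization of $\VMOSH(\D)$ established just before this lemma. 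Your computation is sound: the spherical decomposition $d\lambda_4(s)=s_1^2\,d\lambda_2(s_0,s_1)\,d\sigma(i_s)$ (exactly as in \Cref{ExFMeas}) cancels the factor $1/s_1^2$; the angular set describing $S(\theta_0,h)\cap\D_i^+$ indeed has one-dimensional measure at most $4h$, since both $S_i(\theta_0,h)$ and $S_{-i}(\theta_0,h)$ can contribute to the upper half-disc; and the tail $\int_{1-h}^1(1-r^2)\phi(r)^2r\,dr$ tends to $0$ by the absolute convergence guaranteed by hypothesis (ii). The resulting bound is manifestly uniform in $\theta_0$ and also supplies the plain Carleson bound needed for $\mu_f$ to qualify as a \emph{vanishing} Carleson measure, a point you leave implicit but which is immediate from your estimate. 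The trade-off between the two arguments: the paper's proof is a two-line reduction to a classical one-variable fact, in keeping with the splitting technique used throughout; yours exploits the rotational symmetry of the hypothesis (the majorant $\phi$ depends only on $|s|$) to obtain a clean four-dimensional estimate that bypasses the complex majorant criterion entirely, at the cost of resting on the heavier Carleson-measure characterization of $\VMOSH(\D)$.
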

\begin{proof}  Let $i,j\in \S$ with $j\perp i$ and apply \Cref{SplitLem} in order to write  $f_i = f_1+f_2 j$ with holomorphic components $f_1$ and $f_2$. As 
\[ |\sderiv f(z)|^2= |f_1'(z)|^2+ |f_2'(z)|^2, \forall z\in \D_i ,\]
we have $ | f_\ell'(z) |\leq \phi(r)$ for   $|z|=r  $  and $\ell=1,2$. Since $(ii)$ holds, we can apply the respective complex result, cf. \cite[p.~25]{danikas}, to conclude that $f_1,f_2\in \VMOA( \D_i )$. This implies by \Cref{VMOSplit} that  $f\in \VMOSH(\D_i)$ and thus we have finished the proof.

 \end{proof}

\begin{lemma}
Let $f(s) = \sum_{\ell=1}^{+\infty} s^{n_{\ell}}a_{\ell}$ with $a_{\ell}\in\H$ and $n_{\ell}\in\N$ such that $n_{\ell+1}/n_{\ell} \geq \alpha >1 $ for all $\ell\in\N$. Then the following statements are equivalent:
\begin{enumerate}[(i)]
\item $f\in\BMOSH(\D)$
\item $f\in\VMOSH(\D)$
\item $f\in\hardy^2(\D)$. 
\end{enumerate} 
\end{lemma}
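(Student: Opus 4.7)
The plan is to reduce the claim to the analogous well-known result in the complex setting by means of the Splitting Lemma, exploiting the fact that the Hadamard gap structure is preserved when we split off complex components.

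First I fix $i,j\in\S$ with $i\perp j$ and decompose the coefficients as $a_\ell = b_\ell + c_\ell j$ with $b_\ell,c_\ell\in\C_i$. Restricting the power series to $\D_i$ yields
\[ f_i(z) = \sum_{\ell=1}^{+\infty} z^{n_\ell} b_\ell + \left(\sum_{\ell=1}^{+\infty} z^{n_\ell} c_\ell\right) j, \]
which is exactly the splitting $f_i=f_1+f_2 j$ from \Cref{SplitLem}, where $f_1(z) = \sum z^{n_\ell} b_\ell$ and $f_2(z) = \sum z^{n_\ell} c_\ell$ are holomorphic power series on $\D_i$ that inherit the exponents $n_\ell$ and hence the Hadamard gap condition $n_{\ell+1}/n_\ell\geq\alpha>1$.

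Next I invoke the three splitting-type characterizations already proved in the paper: by \Cref{BMOComponents}, $f\in\BMOSH(\D)$ if and only if $f_1,f_2\in\BMOA(\D_i)$; by \Cref{VMOSplit}, $f\in\VMOSH(\D)$ if and only if $f_1,f_2\in\VMOA(\D_i)$; and by \Cref{HardySplit}, $f\in\hardy^2(\D)$ if and only if $f_1,f_2\in\hardy^2_{\C}(\D_i)$. Thus the three conditions (i), (ii), (iii) are each equivalent to the corresponding pair of conditions on the complex lacunary series $f_1$ and $f_2$.

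It therefore suffices to quote the classical complex theorem that, for a holomorphic function on the disc whose Taylor expansion has Hadamard gaps, membership in $\hardy^2(\D_i)$, $\BMOA(\D_i)$ and $\VMOA(\D_i)$ are all equivalent (each being characterized by $\sum|b_\ell|^2<+\infty$, respectively $\sum|c_\ell|^2<+\infty$); see, e.g., the lacunary series discussion in \cite[Section~2.4]{danikas} or the corresponding statements in \cite{Girela}. Applying this to both $f_1$ and $f_2$ simultaneously, we obtain the chain (i) $\Leftrightarrow$ (ii) $\Leftrightarrow$ (iii) in the slice hyperholomorphic setting.

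The only genuine subtlety is making sure the Splitting Lemma really preserves the lacunary form of the series (which it does because the decomposition $a_\ell=b_\ell+c_\ell j$ is coefficient-wise and the powers $s^{n_\ell}$ act as scalars on $\D_i$); beyond this, the argument is a clean transplant of the classical theorem through the splitting/representation machinery, with no new estimates required.
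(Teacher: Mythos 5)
Your proof is correct and follows essentially the same route as the paper: decompose the coefficients $a_\ell$ over $\C_i$, observe that the splitting produces two complex lacunary series with the same exponents, and then combine the splitting characterizations of $\BMOSH(\D)$, $\VMOSH(\D)$ and $\hardy^2(\D)$ with the classical equivalence for Hadamard-gap series (Theorem~9 in \cite{danikas}). Your explicit remark that the gap structure is preserved coefficient-wise is exactly the (implicit) key point of the paper's argument.
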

\begin{proof}
Choose $i,j\in\S$ with $i\perp j$ and write $a_\ell = a_{\ell,1} + a_{\ell,2}j$ with $a_{\ell,1},a_{\ell,2}\in\C_i$ for $\ell\in\N$. Applying \Cref{SplitLem} and writing $f_i = f_1 + f_2j$ with holomorphic components, we find that $f_{1}(z) = \sum_{\ell=1}^{+\infty} a_{\ell,1}z^{n_\ell}$ and $f_{2}(z) = \sum_{\ell=1}^{+\infty} a_{\ell,2}z^{n_\ell}$ are complex power series with Hadamard gaps. The lemma follows therefore from the respective complex result, Theorem~9 in \cite{danikas}, and \Cref{BMOSplit,VMOSplit,HardySplit}.

\end{proof}

\section{Relations to other spaces of slice hyperholomorphic functions}\label{RelSec}
In the complex theory there exist relations between the spaces of holomorphic functions with bounded and vanishing mean oscillation and several other spaces of holomorphic functions. The aim of this last section is to show some of the respective counterparts in the quaternionic setting. 
\begin{definition}
The Bloch space $\bloch(\D)$ of slice hyperholomorphic functions is the space of all functions $f\in\lhol(\D)$ such that 
\begin{equation}\label{BlochNorm}
 \|f\|_{\bloch} = |f(0)| + \sup_{s\in\D}\, (1-|s|)^2\left|\sderiv f(s)\right| < +\infty.
\end{equation}
The little Bloch space $\bloch_0(\D)$ of slice hyperholomorphic functions is the space of all functions $f\in\bloch(\D)$ such that 
\begin{equation}
\lim_{|s|\nearrow 1} (1-|s|^2)\left|\sderiv f(s)\right| = 0.
\end{equation}
\end{definition}
\begin{remark}
The Bloch space is a non-separable quaternionic Banach space with the norm defined in \eqref{BlochNorm} and the little Bloch space is the closure of the set of slice hyperholomorphic polynomials in $\bloch(\D)$ and hence a separable subspace of $\bloch(\D)$. These spaces were studied in \cite{CGS3}. Moreover, as it usually happens in theory of slice hyperholomorphic functions, if one takes the supremum in \eqref{BlochNorm} only over one slice $\D_i$ instead of the entire ball $\D$, then one obtains the norm $\|f \|_{\bloch(\D_i)}$, which is equivalent to $\|f\|_{\bloch(\D)}$ as a consequence of the representation formula. Furthermore, if $i,j\in\S$ with $i\perp j$ and we write $f_i = f_1 + f_2j$ by applying \Cref{SplitLem}, then $f$ belongs to $\bloch(\D)$ resp. $\bloch_0(\D)$ if and only if the component functions $f_1,f_2$ belong to the respective complex spaces $\bloch_{\C}(\D_i)$ and $\bloch_{\C,0}(\D_i)$. 
\end{remark}
\begin{lemma}
For $f\in\BMOSH(\D)$, we have
\begin{equation}\label{BlochEstim}
 \|f\|_{\bloch} \lesssim \|f\|_{\BMOSH(\D)}.
 \end{equation}
Hence $\BMOSH(\D)\subset\bloch(\D)$ and $\VMOSH(\D) \subset \bloch_0(\D)$, where these inclusions are continuous.  
\end{lemma}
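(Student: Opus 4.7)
The plan is to reduce the statement to the classical complex case via the Splitting Lemma and the known complex inclusion $\BMOA(\D_\C)\subset\bloch_{\C}(\D_\C)$. Fix $i,j\in\S$ with $i\perp j$, and for $f\in\BMOSH(\D)$ write $f_i=f_1+f_2 j$ with holomorphic components $f_1,f_2:\D_i\to\C_i$ as in \Cref{SplitLem}.

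First I would relate the Bloch norm of $f$ to those of its complex components. Since $\sderiv f(z)=f_1'(z)+f_2'(z)j$ for $z\in\D_i$, one has
\[
|\sderiv f(z)|^2=|f_1'(z)|^2+|f_2'(z)|^2 \leq (|f_1'(z)|+|f_2'(z)|)^2.
\]
Using the equivalence of $\|\cdot\|_\bloch$ with the slice-restricted norm $\|\cdot\|_{\bloch(\D_i)}$ (noted in the preceding remark), taking supremum over $\D_i$ yields
\[
\|f\|_\bloch \;\lesssim\; \|f_1\|_{\bloch_\C(\D_i)} + \|f_2\|_{\bloch_\C(\D_i)}.
\]

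Next I would invoke the classical complex inclusion $\BMOA(\D_i)\subset\bloch_\C(\D_i)$ with continuous embedding (a standard result, e.g.\ \cite[Thm.~5.1]{Girela} or \cite[p.~25]{danikas}), obtaining $\|f_\ell\|_{\bloch_\C(\D_i)}\lesssim \|f_\ell\|_{\BMOA(\D_i)}$ for $\ell=1,2$. Chaining this with \eqref{BMOSplitEst}, which gives $\|f_1\|_{\BMOA(\D_i)}+\|f_2\|_{\BMOA(\D_i)}\lesssim \|f\|_{\BMOSH(\D)}$ (actually the upper bound direction needed here is the left-hand inequality of \eqref{BMOSplitEst} applied to each component), produces the estimate \eqref{BlochEstim}. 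The continuous inclusion $\BMOSH(\D)\subset\bloch(\D)$ is then immediate.

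For the inclusion $\VMOSH(\D)\subset\bloch_0(\D)$ I would argue analogously: by \Cref{VMOSplit}, $f\in\VMOSH(\D)$ is equivalent to $f_1,f_2\in\VMOA(\D_i)$. The classical inclusion $\VMOA(\D_i)\subset\bloch_{\C,0}(\D_i)$ (see \cite[Thm.~5.5]{Girela}) then yields $f_1,f_2\in\bloch_{\C,0}(\D_i)$, and by the slice-wise characterization of the little Bloch space recalled in the preceding remark, this is equivalent to $f\in\bloch_0(\D)$. Continuity of this restricted inclusion is inherited from \eqref{BlochEstim}.

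There is no serious obstacle here; the only mild subtlety is keeping track of which direction of \eqref{BMOSplitEst} is being used and invoking the correct classical complex embedding in the right form. Everything else is a routine application of the Splitting Lemma together with the slice-wise descriptions of $\BMOSH(\D)$, $\VMOSH(\D)$, $\bloch(\D)$, and $\bloch_0(\D)$ already established in this paper.
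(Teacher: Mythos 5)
Your proof is correct, and for the main estimate \eqref{BlochEstim} it is essentially identical to the paper's: split $f_i=f_1+f_2j$, bound $\|f\|_{\bloch}$ by $\|f_1\|_{\bloch_\C(\D_i)}+\|f_2\|_{\bloch_\C(\D_i)}$, invoke the classical embedding $\BMOA(\D_i)\subset\bloch_\C(\D_i)$, and close the chain with the left-hand inequality of \eqref{BMOSplitEst}. The only genuine divergence is in the inclusion $\VMOSH(\D)\subset\bloch_0(\D)$. You argue componentwise: $f\in\VMOSH(\D)$ iff $f_1,f_2\in\VMOA(\D_i)$ by \Cref{VMOSplit}, then the classical $\VMOA(\D_i)\subset\bloch_{\C,0}(\D_i)$ gives $f_1,f_2\in\bloch_{\C,0}(\D_i)$, and the slice-wise description of $\bloch_0(\D)$ from the preceding remark finishes the job. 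The paper instead avoids the complex VMOA--little-Bloch embedding entirely: it uses that $\VMOSH(\D)$ is the $\BMOSH$-closure of the slice hyperholomorphic polynomials (\Cref{VMOC}), so by \eqref{BlochEstim} every $f\in\VMOSH(\D)$ is a $\bloch$-limit of polynomials, and $\bloch_0(\D)$ is precisely the $\bloch$-closure of the polynomials. Your route needs one more classical ingredient but is more direct; the paper's route is self-contained given \Cref{VMOC} and reuses the estimate just proved. Both are fully rigorous with the material established in the paper.
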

\begin{proof}
Choose $i,j\in\S$ with $i\perp j$ and write $f_i = f_1 + f_2j$ with holomorphic components according to Lemma~\ref{SplitLem}. By Corollary~5.2 and Theorem~5.1 in \cite{Girela}, there exists a constant $C$ such that 
\[ \| f_{\ell} \|_{\bloch_{\C}(\D_i)} \leq C \|f_{\ell}\|_{\BMOA(\D_i)}\quad\text{for }\ell\in\{1,2\}.\]
We thus have by \Cref{BMOSplit} and because of $|f_{\ell}(z)|\leq |f_i(z)|$ for $\ell\in\{1,2\}$ that
\begin{gather*}
 \|f\|_{\bloch(\D)}\lesssim \|f\|_{\bloch(\D_i)} \leq \|f_1\|_{\bloch_{\C}(\D_i)} + \|f_2\|_{\bloch_{\C}(\D_i)}\\
 \leq C \| f_1\|_{\BMOA(\D_i)} + C\|f_2\|_{\BMOA(\D_i)} \leq 2C \| f\|_{\BMOSH(\D_i)} \lesssim \| f\|_{\BMOSH(\D)}
 \end{gather*}
Consequently $\BMOSH(\D)$ is continuously embedded into $\bloch(\D)$. Furthermore any function in $\VMOSH(\D)$ can be approximated by  slice hyperholomorphic polynomials in $\BMOSH(\D)$ by \Cref{VMOC} and thus, because of \eqref{BlochEstim}, also in $\bloch(\D)$. Since the little Bloch space is the closure of such polynomials in $\bloch(\D)$, we find $\VMOSH(\D)\subset\bloch_0(\D)$.

\end{proof}

\begin{definition}
The Dirichlet space $\dirichlet(\D)$ of slice hyperholomorphic functions is the space of all functions $f\in\lhol(\D)$ such that
\[\sup_{i\in\S}\int_{\D_i}\left|\sderiv f(s)\right|^2\,\lambda_2(s) < +\infty \] 
\end{definition}
\begin{remark}
The Dirichlet space $\dirichlet(\D)$ is a quaternionic right Hilbert space, when it is equipped with the scalar product
\[ \langle f,g\rangle := \overline{ f(0)}g(0) + \int_{\D_i} \overline{\sderiv f(s)}\sderiv g(z)\,d\lambda_2(s),\]
where $i$ is some imaginary unit in $\S$. This scalar product is independent of the choice of $i$: if we write $f$ and $g$ as Taylor series $f(z) = \sum_{n=0}^{+\infty}s^na_n$ and $g(s) = \sum_{n=0}^{+\infty}s^nb_n$, then
\begin{align*}
 \langle f,g\rangle =& \overline{f(0)}g(0)+ \sum_{n=0}^{+\infty} \sum_{k=0}^{+\infty} \overline{(n+1)a_{n+1} }\int_{\D_i} \overline{s}^{n} s^{k}\,d\lambda_2(z) \, (k+1)b_{k+1}
\\
=& \overline{f(0)}g(0)+ \sum_{n=0}^{+\infty} \pi(n+1)\overline{a_{n+1}}b_{n+1},
\end{align*}
 because $\int_{\D_i} \overline{s}^{k+1} s^{n+1}\,d\lambda_2(z) = 0$ if $k \neq n$ and $\int_{\D_i} \overline{s}^{n+1} s^{n+1}\,d\lambda_2(z) = \pi/(n+1)$. The Dirichlet space was also studied in \cite{CGS3} and, as usually, if we choose $i,j\in\S$ with $i\perp j$ and write $f_i = f_1 + f_2j$ with holomorphic components according to \Cref{SplitLem}, then the function $f$ belongs to $\dirichlet(\D)$ if and only if $f_1$ and $f_2$ belong to the complex Dirichlet space $\dirichlet_{\C}(\D_i)$. 

\end{remark}
\begin{lemma}
We have $\dirichlet(\D)\subset\VMOSH(\D)$.
\end{lemma}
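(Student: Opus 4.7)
The plan is to reduce to the corresponding complex inclusion $\dirichlet_{\C}(\D_i)\subset\VMOA(\D_i)$ via the Splitting Lemma, which is the standard bridge between the quaternionic function spaces considered in this paper and their complex analogues.

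More precisely, I would pick $i,j\in\S$ with $i\perp j$ and, given $f\in\dirichlet(\D)$, apply \Cref{SplitLem} to write $f_i=f_1+f_2j$ with holomorphic components $f_1,f_2\colon \D_i\to\C_i$. According to the remark following the definition of $\dirichlet(\D)$, the membership $f\in\dirichlet(\D)$ is equivalent to $f_1,f_2\in\dirichlet_{\C}(\D_i)$.

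Next, I would invoke the classical inclusion $\dirichlet_{\C}(\D_i)\subset\VMOA(\D_i)$; this is a well-known fact in the complex setting (see e.g. \cite{Girela}), following from the Carleson-measure characterization of $\VMOA$ together with the observation that $(1-|z|^2)|f'(z)|^2\,d\lambda_2(z)$ is a vanishing Carleson measure on $\D_i$ whenever $\int_{\D_i}|f'(z)|^2\,d\lambda_2(z)<+\infty$. Hence $f_1,f_2\in\VMOA(\D_i)$. Finally, \Cref{VMOSplit} yields $f\in\VMOSH(\D)$.

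The main (and essentially only) non-trivial ingredient is thus the complex inclusion $\dirichlet_{\C}(\D_i)\subset\VMOA(\D_i)$; once this is cited, everything else is a direct consequence of the slice-hyperholomorphic splitting machinery already established in the paper, and so the proof is very short.
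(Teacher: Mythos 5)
Your proposal is correct and follows essentially the same route as the paper: split $f_i=f_1+f_2j$, use the equivalence $f\in\dirichlet(\D)\Leftrightarrow f_1,f_2\in\dirichlet_{\C}(\D_i)$, invoke the classical inclusion $\dirichlet_{\C}(\D_i)\subset\VMOA(\D_i)$ (the paper cites Theorem~10 of \cite{danikas} for this), and conclude via \Cref{VMOSplit}.
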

\begin{proof}
Let $f\in\lhol(\D)$, choose $i,j\in\S$ and write $f_i = f_1 + f_2 j$ with holomorphic components according to \Cref{SplitLem}. If $f\in\dirichlet(\D)$, then $f_1$ and $f_2$ belong to the complex Dirichlet space $\dirichlet_{\D}(\D_i)$. By \cite[Theorem~10]{danikas}, this implies that $f_1,f_2\in\VMOA(\D_i)$ and hence we conclude from \Cref{VMOSplit} that $f\in\VMOSH(\D)$.

\end{proof}

We conclude the paper with a version of Fefferman's duality theorem. Note that the identification $(\hardy^1(\D))^*\cong\BMOSH(\D)$ has already been shown in \cite{Arcozzi}.

\begin{theorem}
We have $(\VMOSH(\D))^*\cong \hardy^1(\D)$ and $(\hardy^1(\D))^* \cong\BMOSH(\D)$ under the integral pairing
\begin{equation}\label{pairing}
\langle f,g\rangle = \frac{1}{2\pi}\int_{\partial(\D_i)}\overline{f\left(e^{i\theta}\right)}\,g\left(e^{i\theta}\right)\,d\theta,
\end{equation}
where $i\in\S$ is an arbitrary imaginary unit. The operator norm and the ${\|\cdot\|_{\hardy^1(\D)}}$-norm are equivalent norms on $(\VMOSH(\D))^*$. Similarly, the operator norm and the $\|\cdot\|_{\BMOSH(\D)}$-norm are equivalent norms on $(\hardy^1(\D))^*$. Moreover, the identification is independent of the choice of the imaginary unit $i\in\S$.
\end{theorem}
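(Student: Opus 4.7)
The strategy is to reduce both dualities to the classical Fefferman theorem via the Splitting Lemma. Since the second identification $(\hardy^1(\D))^*\cong\BMOSH(\D)$ is already established in \cite{Arcozzi}, my focus is on showing $(\VMOSH(\D))^*\cong\hardy^1(\D)$. The norm equivalences will follow automatically from the splitting estimates \eqref{BMOSplitEst} and \eqref{HardySplitEst} together with the classical complex norm equivalences. Throughout I fix auxiliary imaginary units $i,j\in\S$ with $i\perp j$.

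First I would verify boundedness of the pairing. For $f\in\BMOSH(\D)$ and $g\in\hardy^1(\D)$, the Splitting Lemma gives $f_i=f_1+f_2j$ and $g_i=g_1+g_2j$ with $f_1,f_2\in\BMOA(\D_i)$ by \Cref{BMOComponents} and $g_1,g_2\in\hardy^1_\C(\D_i)$ by \Cref{HardySplit}. A direct computation using $\overline{f_2j}=-f_2j$ and $jc=\bar c j$ for $c\in\C_i$ expands the integrand as
\begin{equation*}
\overline{f_i(e^{i\theta})}g_i(e^{i\theta}) = \bigl(\overline{f_1}g_1+f_2\overline{g_2}\bigr) + \bigl(\overline{f_1}g_2-f_2\overline{g_1}\bigr) j,
\end{equation*}
and integrating identifies $\langle f,g\rangle$ as a combination of the four classical complex pairings $\langle f_k,g_\ell\rangle_\C = \frac{1}{2\pi}\int\overline{f_k}g_\ell\,d\theta$ (and their conjugates). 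Each such complex pairing is controlled by $\|f_k\|_{\BMOA(\D_i)}\|g_\ell\|_{\hardy^1_\C(\D_i)}$ via the classical Fefferman theorem, so combining with \eqref{BMOSplitEst} and \eqref{HardySplitEst} gives $|\langle f,g\rangle|\lesssim\|f\|_{\BMOSH(\D)}\|g\|_{\hardy^1(\D)}$. In particular, every $g\in\hardy^1(\D)$ induces a bounded functional $\Phi(g)=\langle\cdot,g\rangle$ on $\VMOSH(\D)$.

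Next I would prove surjectivity of $\Phi\colon\hardy^1(\D)\to(\VMOSH(\D))^*$. Given $\psi$, define $\psi_1(\varphi):=\psi(\ext(\varphi))$ for $\varphi\in\VMOA(\D_i)$; this is well defined by \Cref{VMOSplit} and \Cref{ExtLem}. Splitting $\psi_1(\varphi)=\alpha(\varphi)+\beta(\varphi)j$ with $\alpha,\beta\colon\VMOA(\D_i)\to\C_i$, the identity $jc=\bar cj$ together with right linearity of $\psi$ and of $\ext$ forces $\alpha(\varphi c)=\alpha(\varphi)c$ (right $\C_i$-linear) and $\beta(\varphi c)=\beta(\varphi)\bar c$ (right $\C_i$-antilinear). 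The classical Fefferman theorem $(\VMOA(\D_i))^*\cong\hardy^1_\C(\D_i)$ (applied to $\alpha$, and to $\overline\beta$ for the antilinear component) produces $g_1,g_2\in\hardy^1_\C(\D_i)$ such that $\alpha$ and $\beta$ are each realised by complex pairings against $g_1,g_2$. I then set $g:=\ext(g_1)+\ext(g_2)j\in\hardy^1(\D)$ by \Cref{HardySplit}, and verify via the formula above that $\psi(\ext(\varphi))=\langle\ext(\varphi),g\rangle$. The restriction of $\psi$ to the subspace $\{\ext(\varphi)j:\varphi\in\VMOA(\D_i)\}$ is automatically handled because right linearity gives $\psi(\ext(\varphi)j)=\psi(\ext(\varphi))j$; since every $f\in\VMOSH(\D)$ decomposes uniquely as $\ext(f_1)+\ext(f_2)j$, this gives $\psi=\langle\cdot,g\rangle$ on all of $\VMOSH(\D)$. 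Injectivity of $\Phi$ is easy: testing $\langle\cdot,g\rangle\equiv 0$ against monomials $s^nc$ (which lie in $\VMOSH(\D)$) yields $\overline c\, b_n=0$ for all $c\in\H$, forcing all Taylor coefficients $b_n$ of $g$ to vanish.

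Finally, for independence of $i$, I would observe that on slice hyperholomorphic polynomials $f(s)=\sum s^n a_n$ and $g(s)=\sum s^n b_n$ the pairing reduces by Fourier orthogonality in $\C_i$ to $\langle f,g\rangle=\sum_n\overline{a_n}\,b_n$, which is manifestly independent of $i$. Polynomials are dense in $\VMOSH(\D)$ by \Cref{VMOC} and dense in $\hardy^1(\D)$ by the classical density and \Cref{HardySplit}, so continuity of the pairing (Step 1) extends the independence to all of $\VMOSH(\D)\times\hardy^1(\D)$; for the $\BMOSH\times\hardy^1$ pairing only density in the $\hardy^1$-factor is needed and the functional has already been fixed. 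The main obstacle I anticipate is the careful bookkeeping of the right $\C_i$-linearity vs. antilinearity of the components $\alpha,\beta$ (forced by $jc=\bar cj$) and matching the representation produced by classical Fefferman with the exact form of the quaternionic pairing; once the conjugations are tracked correctly the argument reduces cleanly to the complex case.
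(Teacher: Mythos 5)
Your overall strategy---split into $\C_i$-valued components via \Cref{SplitLem}, invoke the classical Fefferman duality on each component, and reassemble with \Cref{ExtLem}---is exactly the route the paper takes, and most of your individual steps (the expansion of $\overline{f_i}\,g_i$, the linear/antilinear decomposition $\psi_1=\alpha+\beta j$, the density argument for independence of $i$) are sound. There is, however, one concrete error: you have oriented the pairing the wrong way round. You put the $\VMOSH$-function in the first (conjugated) slot and the $\hardy^1$-function in the second, so the map you call $\Phi(g)=\langle\cdot,g\rangle$ satisfies $\Phi(g)(fc)=\overline{c}\,\Phi(g)(f)$ for $c\in\H$; it is not a right $\H$-linear functional and hence does not lie in $(\VMOSH(\D))^*$ in the sense used both by the paper and by your own surjectivity step. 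The mismatch becomes fatal in the verification $\psi(\ext(\varphi))=\langle\ext(\varphi),g\rangle$: your component $\alpha$ is right $\C_i$-linear, whereas the first component $\varphi\mapsto\frac{1}{2\pi}\int\overline{\varphi}\,g_1\,d\theta$ of your pairing is conjugate-linear in $\varphi$, so the two can agree only if both vanish. Likewise $\langle\ext(\varphi)j,g\rangle=-j\,\langle\ext(\varphi),g\rangle$, which is not $\psi(\ext(\varphi))j$ in general, so the passage from the subspace $\ext(\VMOA(\D_i))$ to all of $\VMOSH(\D)$ also breaks down.

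The repair is precisely the paper's convention: the representing $\hardy^1$-function must occupy the conjugated slot, i.e.\ $f\in\hardy^1(\D)$ induces the functional $g\mapsto\frac{1}{2\pi}\int_{\partial\D_i}\overline{f\left(e^{i\theta}\right)}\,g\left(e^{i\theta}\right)\,d\theta$ on $g\in\VMOSH(\D)$, which \emph{is} right-linear in $g$. With that orientation your bookkeeping goes through: writing the representing function as $\ext(g_1)+\ext(g_2)j$, the two components of the pairing are respectively $\C_i$-linear and $\C_i$-antilinear in the test function, matching your $\alpha$ and $\beta$, so applying the classical theorem to $\alpha$ and $\overline{\beta}$ is legitimate. (The paper avoids the antilinear component altogether by decomposing the functional as $\tilde{\varphi}_1-j\tilde{\varphi}_2$ with $j$ on the \emph{left}, obtaining two genuinely $\C_i$-linear pieces at the cost of the conjugations in its formula for $\varphi(g)$; either bookkeeping works once the pairing is oriented correctly.) Finally, the theorem also asserts the equivalence of the operator norm with $\|\cdot\|_{\hardy^1(\D)}$ resp.\ $\|\cdot\|_{\BMOSH(\D)}$; you assert this follows ``automatically'' from \eqref{BMOSplitEst} and \eqref{HardySplitEst}, which is true, but you should record the intermediate estimate $\|\varphi_1\|+\|\varphi_2\|\lesssim\|\varphi\|\lesssim\|\varphi_1\|+\|\varphi_2\|$ (the paper's \eqref{PhiPhiiEst}) that makes the chain of comparisons actually close.
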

\begin{proof}
Let first $f\in\hardy^1(\D)$. We choose $i,j\in\S$ with $i\perp j$ and write $f_i=f_1 + f_2j$ according to \Cref{SplitLem}. By  \Cref{VMOSplit}, we have $f_1,f_2\in\hardy^1(\D_i)$ and we deduce from Theorem~7.1 in \cite{Girela} that there exist linear maps $\varphi_{\ell}\in(\VMOA(\D_i))^*$ for $\ell\in\{1,2\}$ such that 
\begin{equation}\label{PhiF}
\varphi_{\ell}(g) = \frac{1}{2\pi}\int_{\partial\D_i}\overline{f_{\ell}\left( e^{i\theta}\right)}\, g\left(e^{i\theta}\right)\,d\theta \qquad\text{for }g\in\VMOA(\D_i).
\end{equation}

Now write $g_i = g_1 + g_2 j$ for every $g\in\VMOSH(\D)$ with $g_1,g_2\in\VMOA(\D_i)$ according to \Cref{VMOSplit} and set
 \begin{align*}
\varphi(g) := & \varphi_1(g_1) + \overline{\varphi_2(g_2)} + j\left( \overline{ \varphi_1(g_2)} - \varphi_2(g_1)\right).
 \end{align*}
 Then $\varphi$ is a quaternionic linear functional on $\VMOSH(\D)$ and we have
 \begin{equation}\label{PhiPhiiEst}
 \|\varphi_1\| + \|\varphi_2\| \lesssim \| \varphi\| \lesssim \|\varphi_1\| + \| \varphi_2\|.
 \end{equation}
 Indeed, it is
 \begin{align*}
 \| \varphi_1\| + \|\varphi_2\| \leq & \sup\left\{ |\varphi_1(g)| : g\in\VMOA(\D_i), \|g\|_{\BMOA(\D_i)} = 1\right\}\\
 & + \sup\left\{ |\varphi_2(g)| : g\in\VMOA(\D_i), \|g\|_{\BMOA(\D_i)} = 1\right\}.
 \end{align*}
 Since $\varphi_{\ell}(g)\in\C_i$, we have $|\varphi_{\ell}(g)| \leq |\varphi_1(g) - j\varphi_2(g)| = |\varphi(\ext(g))|$ for $\ell\in\{1,2\}$ and in turn
  \begin{align*}
 \| \varphi_1\| + \|\varphi_2\| \leq & 2 \sup\left\{ |\varphi(\ext(g))| : g\in\VMOA(\D_i), \|g\|_{\BMOA(\D_i)} = 1\right\}\\
 \leq & 2 \sup\left\{ |\varphi(g)| : g\in\VMOSH(\D), \|g\|_{\BMOSH(\D)} \leq C\right\} = \frac{2}{C}\|\varphi\|, 
 \end{align*}
 where $C$ is the constant in \eqref{BMOSplitEst} such that $\|\ext(g) \|_{\BMOSH(\D_i)} \leq C\| g\|_{\BMOA(\D_i)}$. On the other hand, writing again $g_i = g_1 + g_2j$ according to \Cref{SplitLem}, we have
 \begin{align*}
 \| \varphi \| \leq & \sup\left\{ \sum_{\kappa,\ell = 1}^2|\varphi_{\ell}(g_{\kappa}) | : g\in\VMOSH(\D), \| g\|_{\BMOSH(\D_i)} \leq 1 \right\}\\
 \leq &  2\sup\left\{ |\varphi_{1}(g) | : g\in\VMOA(\D_i), \| g\|_{\BMOA(\D_i)} \leq \widetilde{C} \right\}\\
 & +  2 \sup\left\{ |\varphi_{2}(g) | : g\in\VMOA(\D_i), \| g\|_{\BMOA(\D_i)} \leq \widetilde{C} \right\} \\
 \leq &\frac{2}{\widetilde{C}} (\|\varphi_1\| + \|\varphi_2\|),
 \end{align*}
 where $\widetilde{C}>0$ is the constant in \eqref{BMOSplitEst} such that $\| g_{\ell}\|_{\BMOA(\D_i)} \leq \widetilde{C}\|g\|_{\BMOSH(\D_i)}$ for $g\in\BMOSH(\D_i)$. 
 
 Since \eqref{PhiPhiiEst} holds true, $\varphi$ is even continuous on $\VMOSH(\D)$ and hence $\varphi\in(\VMOSH(\D))^*$. Moreover
\begin{equation}\begin{split} \label{SomeCalc}
  \varphi(g) = & \varphi_1(g_1) +\varphi_1(g_2)j - j \varphi_2(g_1) - j\varphi_2(g_2)j\\
=& \frac{1}{2\pi}\int_{\partial\D_i}\overline{f_{1}\left( e^{i\theta}\right)}\, g_1\left(e^{i\theta}\right)\,d\theta + \frac{1}{2\pi}\int_{\partial\D_i}\overline{f_{1}\left( e^{i\theta}\right)}\, g_2\left(e^{i\theta}\right)\,d\theta j \\
&- j\frac{1}{2\pi}\int_{\partial\D_i}\overline{f_{2}\left( e^{i\theta}\right)}\, g_1\left(e^{i\theta}\right)\,d\theta - j \frac{1}{2\pi}\int_{\partial\D_i}\overline{f_{2}\left( e^{i\theta}\right)}\, g_2\left(e^{i\theta}\right)\,d\theta j\\
=& \frac{1}{2\pi}\int_{\partial\D_i}\overline{f\left( e^{i\theta}\right)}\, g\left(e^{i\theta}\right)\,d\theta.
 \end{split}\end{equation}
From \eqref{PhiPhiiEst} and  \eqref{HardySplitEst} it follows that
\begin{align*}
\| \varphi\| \leq \|\varphi_1\| + \|\varphi_2\| \lesssim \|f_1\|_{\hardy_{\C}^1(\D_i)} + \|f_2\|_{\hardy_{\C}^1(\D_i)} \lesssim \| f\|_{\hardy^1(\D)î}
\end{align*}
and
\begin{align*}
\| f\|_{\hardy^1(\D)} \lesssim \|f_1\|_{\hardy_{\C}^1(\D_i)} + \| f_2 \|_{\hardy^1_{\C}(\D_i)} \lesssim \|\varphi_1\|  + \|\varphi_2\| \lesssim \| \varphi\|
\end{align*}
 such that altogether
 \[ \| f\|_{\hardy^1(\D)} \lesssim \|\varphi\| \lesssim \|f\|_{\hardy^1(\D)}. \]
 
 If on the other hand $\varphi \in (\VMOSH(\D))^*$, then we can choose again $i,j\in\S$ with $i\perp j$ and write $\varphi(g) = \tilde{\varphi}_1(g) - j\tilde{\varphi}_2(g)$ with $\C_i$-valued functions $\tilde{\varphi}_1$ and $\tilde{\varphi}_2$. These functions are even $\C_i$-linear and hence the maps $\varphi_{\ell}:g \mapsto \tilde{\varphi}_\ell(\ext(g))$ for $g\in\VMOA(\D_i)$ with $\ell\in\{1,2\}$ are $\C_i$-linear functionals on $\VMOA(\D_i)$. Since $\|f\|_{\BMOA(\D_i)}$ and $\|\ext(f)\|_{\BMOSH(\D)}$ are equivalent norms on $\VMOSH(\D)$ because of \eqref{BMOSplitEst}, these functionals are even continuous and hence belong to $(\VMOA(\D_i))^*$. From \cite[Theorem~7.1]{Girela} we therefore deduce the existence of two functions $f_1,f_2\in\hardy_{\C}^1(\D_i)$ such that \eqref{PhiF} holds true. Setting $f := \ext(f_1 + f_2j)\in\hardy^1(\D_i)$, we find by a computation as in \eqref{SomeCalc} that \eqref{pairing} holds true. Hence, $(\VMOSH(\D))^* \cong \hardy^1(\D)$. 
 
 The second identity $(H^1(\D))^*\cong\BMOSH(\D)$ can be shown by analogous arguments or as it was done in \cite{Arcozzi}.
 
 Finally, the identifications are independent of the choice of the unit $i\in\S$ in the integral pairing: if we write $f(s)=\sum_{n=0}^{+\infty}s^nf_n$ and $g(s) = \sum_{n=0}^{+\infty}s_ng_n$ as power series, then
 \begin{align*}
\langle f,g\rangle = \sum_{n,m=0}^{+\infty} \overline{a_n}\left(\frac{1}{2\pi}\int_{\partial(\D_i)}\overline{e^{in\theta}}\,e^{im\theta}\,d\theta\right) b_{m} = \sum_{n=0}^{+\infty}\overline{a_n}b_n
\end{align*}

\end{proof}

 \end{document}